\definecolor{dark-gray}{gray}{0.3}
\definecolor{mauve}{rgb}{0.58,0,0.82}
\renewcommand{\vec}[1]{#1}
\newcommand{\Painleve}{Painlev\'e\ }
\definecolor{DarkBlue}{rgb}{0.00,0.00,0.55}
\definecolor{Black}{rgb}{0.00,0.00,0.00}
\title{Deflation techniques for finding distinct solutions of nonlinear partial differential equations}
\author{
  P. E. Farrell\thanks{Mathematical Institute, University of Oxford, Oxford, UK.
    Center for Biomedical Computing, Simula Research Laboratory, Oslo, Norway
    (\texttt{patrick.farrell@maths.ox.ac.uk}).}
  \and
  \'{A}. Birkisson\thanks{Mathematical Institute, University of Oxford, Oxford, UK
    (\texttt{birkisson@maths.ox.ac.uk}).
    }
  \and
  S. W. Funke\thanks{Center for Biomedical Computing, Simula Research Laboratory, Oslo, Norway and
    Department of Earth Science and Engineering,
    Imperial College London, London, UK.
    (\texttt{s.funke09@imperial.ac.uk}).
    This research is funded by EPSRC grant EP/K030930/1, an ARCHER RAP award, a Sloane Robinson Foundation Graduate Award associated with Lincoln College,
    by the European Research Council under the European Union's Seventh Framework Programme (FP7/2007-2013)/ERC grant 291068,
    and a Center of Excellence grant from the Research
    Council of Norway to the Center for Biomedical Computing at Simula
    Research Laboratory. The authors would like to acknowledge useful discussions with E.~S\"uli and L.~N.~Trefethen.
    }
  }
\begin{document}

\maketitle

\begin{abstract}
Nonlinear systems of partial differential equations (PDEs) may permit several distinct
solutions. The typical current approach to finding distinct solutions is to start Newton's method with
many different initial guesses, hoping to find starting points that lie in different basins of
attraction. In this paper, we present an infinite-dimensional deflation algorithm for systematically
modifying the residual of a nonlinear PDE problem to eliminate known solutions from consideration.
This enables the Newton--Kantorovitch iteration to converge to several different solutions, even starting from
the same initial guess. The deflated Jacobian is dense, but an efficient preconditioning strategy is
devised, and the number of Krylov iterations is observed not to grow as solutions are deflated. The power
of the approach is demonstrated on several problems from special functions, phase separation,
differential geometry and fluid mechanics that permit distinct solutions.
\end{abstract}

\begin{keywords}
deflation, Newton's method, distinct solutions, continuation.
\end{keywords}

\begin{AMS}
65N30, 65N35, 65H99, 35B32
\end{AMS}

\section{Introduction}
Nonlinear problems may permit nontrivial distinct solutions. This paper is concerned with a
computational technique, called \emph{deflation}, for finding several distinct solutions of nonlinear (systems of)
partial differential equations.

Historically, the first application of related techniques was to finding distinct roots of scalar
polynomials \cite[pp. 78]{wilkinson1963}. Let $p(x)$ be a scalar polynomial, and let $x^{[0]},
x^{[1]}, \dots, x^{[n]}$ be roots of $p$ identified with some iterative algorithm, such as Newton's
method \cite{deuflhard2011}. Then further roots of $p$ may be found by considering the \emph{deflated} function
\begin{equation*}
q(x) = \frac{p(x)}{\displaystyle \prod_{i=0}^{n} (x - x^{[i]})},
\end{equation*}
and applying the same iterative algorithm to $q$.\footnote{In \cite{peters1971}, Peters and
Wilkinson draw a distinction between deflation (algebraically dividing the polynomial $p$ by $(x -
x^{[i]})$, and suppression (where the polynomial division is not performed explicitly, but the
numerator and denominator are separately evaluated and floating point division is performed after
the fact). In the subsequent literature, what these authors call suppression has come to be called
deflation, and we follow that convention here.}

Brown and Gearhart \cite{brown1971} extended this deflation approach to systems of nonlinear
algebraic equations, by considering \emph{deflation matrices} $M(x; r)$ that transform the
residual so that sequences that converge to a solution $r$ of the original problem will not converge to
that solution of the deflated problem.
Let $F$ be the residual of a system of nonlinear algebraic
equations, and let $r$ be a computed solution of $F(x) = 0$. Of the deflation matrices considered by
Brown and Gearhart, norm deflation extends most naturally, and is defined by choosing
\begin{equation*}
M(x; r) \equiv \frac{I}{\left|\left| x - r \right|\right|}
\end{equation*}
as its deflation operator, where $I$ is the appropriate identity matrix and $\left|\left|\cdot \right|\right|$ is some vector norm.
This yields the  modified residual function
\begin{equation*}
G(x) = M(x; r) F(x) = \frac{F(x)}{\left|\left| x - r\right|\right|}.
\end{equation*}
Brown and Gearhart prove that the deflated Newton sequence will not converge to the previous
solution, assuming $r$ is simple. (More precisely, a rootfinding algorithm that employs the
norm of the residual as its merit function will not converge to the previously-identified solution.)

This paper makes several novel contributions. First, we extend the theoretical framework of Brown
and Gearhart to the case of infinite-dimensional Banach spaces, enabling the application of
deflation techniques to systems of partial differential equations. Second, we introduce new
classes of deflation operators to overcome some numerical difficulties of previous methods. Third,
we discuss important details of applying these ideas in practice. Methods for solving PDEs typically
exploit the sparsity of the residual Jacobian; the deflated Jacobian is dense, but we devise
an efficient preconditioner for the dense deflated systems via the Sherman--Morrison formula.
Finally, we demonstrate its widespread applicability on several problems of interest
in the literature.

\subsection{Other related techniques}

There are two main alternative approaches to identifying distinct solutions of nonlinear systems:
numerical continuation, and the approximate integration of the associated Davidenko differential
equation.

The first approach, numerical continuation, is a well-established technique in the scientific
computing literature \cite{chao1975,chien1979, allgower1993, Book:AllgowerGeorg}.
The essential idea of it is as follows: suppose a problem $F$ with solution $u$ is parameterised by a parameter
$\lambda$:
\begin{equation}\label{eqn:parameterised}
 F(u, \lambda) = 0.
\end{equation}
Equation \eqref{eqn:parameterised} could represent an algebraic problem, or an operator equation
such as a PDE with boundary conditions. Respectively, $u$ will either be a vector in $\mathbb{R}^n$
or a function in some function space. The parameter $\lambda$ is usually a real scalar, but may be a
vector of parameters. For a fixed value $\lambda^*$, there may exist
zero, one, or many solutions $u$ for which $F(u, \lambda^*) = 0$. For some problems, the parameter
$\lambda$ appears naturally, whereas for others it may be artificially introduced (such as in
homotopy or incremental loading methods for solving difficult nonlinear equations). Studying how a
solution $u$ of \eqref{eqn:parameterised} varies with $\lambda$ is the subject of \emph{bifurcation
theory}.

Assume we have found one point $u^*$ for which $F(u^*, \lambda^*) = 0$. Then, following the implicit
function theorem in Banach spaces \cite[theorem\ 13.22]{Book:Hunter}, under technical conditions there exist open neighbourhoods
around $\lambda^*$ and $u^*$ and a unique function $f$ for which $u$ can be regarded as a function
of $\lambda$, that is, $u = f(\lambda)$, and $F(f(\lambda), \lambda) = 0$. It is thus possible to
define solution curves in the augmented solution space, which consist of points $(u, \lambda)$ for
which $F(u,\lambda) = 0$. Numerical continuation methods are concerned with tracing out these
curves, which give rise to bifurcation diagrams, an example of which is shown in figure
\ref{fig:bratu_bifurcation}.

\begin{figure}
\centering
\includegraphics[width=\textwidth]{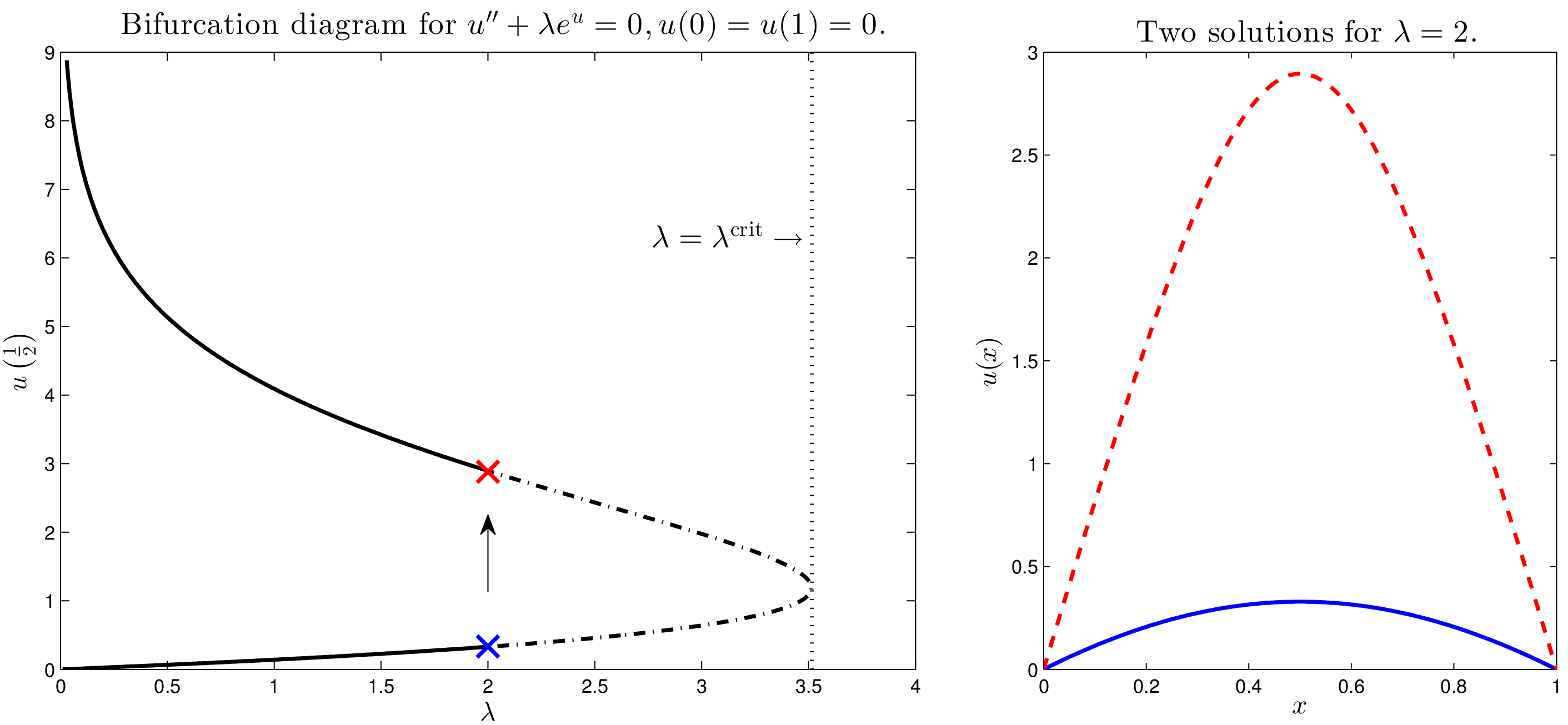}
\caption{A bifurcation diagram for the Bratu--Gelfand ODE \eqref{eqn:bratu}. On the right, the two distinct solutions for $\lambda = 2$ are plotted. For a fixed value of $\lambda$, deflation enables the rootfinding algorithm to find one solution after the other without changing $\lambda$, whereas numerical continuation traces the curve around the turning point (dashed-dotted line).}
\label{fig:bratu_bifurcation}
\end{figure}

While the solution curves can often be parameterised locally by $\lambda$, this parameterisation breaks down at points where the Fr\'echet derivative with respect to the solution
becomes singular. Thus, techniques such as arclength or pseudo-arclength continuation are often applied, so that
the continuation technique can extend beyond such points \cite{seydel2010}.

Consider the Bratu--Gelfand ordinary differential equation (ODE):
\begin{equation} \label{eqn:bratu}
\dfrac{\mathrm{d}^2 u}{\mathrm{d}x^2} + \lambda e^u = 0, \quad u(0) = u(1) = 0.
\end{equation}
For values of $\lambda < \lambda^{\textrm{crit}} \approx 3.51383$
equation \eqref{eqn:bratu} has two solutions; for $\lambda = \lambda^{\textrm{crit}}$, it has one solution,
and no solutions exist for $\lambda >  \lambda^{\textrm{crit}}$ (figure \ref{fig:bratu_bifurcation}) \cite{davis1962}. Fix a value $\lambda^* <
\lambda^{\textrm{crit}}$, and suppose one solution of the two is known. Numerical continuation
traces the solution curve around the turning point, increasing and decreasing $\lambda$ until the
second solution for $\lambda^*$ is identified. By contrast, deflation modifies the residual of the
problem to eliminate the first solution from consideration, enabling the Newton--Kantorovitch iteration
(henceforth referred to as Newton's method) to converge directly to the second solution without changing $\lambda$.

The second approach relies on the numerical integration of the Davidenko differential equation (DDE)
associated with the original nonlinear problem ${F}(u) = 0$ \cite{davidenko1953,branin1972}. The DDE introduces a new
arclength parameter $s$ and considers the augmented system
\begin{equation} \label{eqn:dde}
\dfrac{\mathrm{d}{F}(u(s))}{\mathrm{d}s} + {F}(u(s)) = 0,
\end{equation}
with initial condition $u(s=0)$ given by the initial guess to the solution. This has a strong
connection to Newton's method: provided the Fr\'echet derivative of $F$ with respect to $u$  is nonsingular, the chain
rule implies that
\begin{equation*}
\dfrac{\mathrm{d}{F}(u(s))}{\mathrm{d}s} = {F}'(u) \dfrac{\mathrm{d}u}{\mathrm{d}s}.
\end{equation*}
Hence \eqref{eqn:dde} can be rewritten as
\begin{equation} \label{eqn:dde_newton}
\dfrac{\mathrm{d}u}{\mathrm{d}s} = -\left({F}'(u)\right)^{-1} {F}(u),
\end{equation}
and thus Newton's method is just the forward Euler discretisation of \eqref{eqn:dde_newton}
with unit arclength step. Branin's method consists of considering the modified equation
\begin{equation*} \label{eqn:branin}
\dfrac{\mathrm{d}{F}(u(s))}{\mathrm{d}s} \pm {F}(u(s)) = 0,
\end{equation*}
where the sign is changed whenever the functional determinant of the Fr\'echet derivative changes sign or whenever a solution
is found. The major difficulty with implementing this method is that computing the determinant is
impractical for large-scale problems, where matrix decompositions are not feasible. Limited
computational experience indicates no performance benefit over deflation: applying the Newton-like
forward Euler discretisation of Branin's method to the Allen--Cahn problem of section
\ref{sec:allen_cahn} finds no solutions (whereas deflation finds three), and applying it to the Yamabe
problem of section \ref{sec:yamabe} finds two solutions (whereas deflation finds seven).

Finally, it is possible to employ techniques from numerical algebraic geometry to studying the
solutions of PDEs with polynomial nonlinearities; a comparison of the deflation technique
with this approach is given in section \ref{sec:hao}.

The deflation technique we present here is distinct from those algorithms (often also called
deflation) which aim to improve the convergence of Newton's method towards
\emph{multiple solutions}, solutions at which the Fr\'echet derivative is singular
\cite{ojika1983,griewank1985,leykin2006}. The algorithm presented here is also distinct from
deflation for eigenvalue problems.

\section{Deflation for PDEs}
\subsection{Deflation operators}
We now extend the results of \cite{brown1971} to the case of infinite-dimensional Banach spaces. This is the essential theoretical step in ensuring that deflation will apply to partial differential equations.

\begin{definition}[deflation operator on a Banach space \cite{birkisson2013}] \label{def:deflation}
Let $V,\, W$ and $Z$ be Banach spaces, and $U$ be an open subset of $V$. Let $\mathcal{F}\!\!:\!\!U \subset V \rightarrow W$ be a Fr\'echet
differentiable operator with derivative $\mathcal{F}'$. For each $r \in U$, $u \in U\setminus\{r\}$, let $\mathcal{M}(u; r)\!:\!W \rightarrow Z$ be an invertible linear operator. We say that $\mathcal{M}$ is a
\emph{deflation operator} if for any $\mathcal{F}$ such that $\mathcal{F}(r) = 0$ and
$\mathcal{F}'(r)$ is nonsingular, we have
\begin{align}
\liminf_{i \rightarrow \infty} \left|\left| \mathcal{M}(u_i; r) \mathcal{F}(u_i) \right|\right|_Z > 0
\end{align}
for any sequence $\left\{u_i\right\}$ converging to $r$, $u_i \in U\setminus\{r\}$.
\end{definition}

In order for solutions of the
deflated problem to yield information about solutions of the undeflated problem, the following
properties must hold:
\begin{enumerate}
\item (absence of spurious solutions). Suppose $\mathcal{M}(u; r) \mathcal{F}(u) = 0$ but 
$\mathcal{F}(u) \neq 0$. Then $\mathcal{M}(u; r)$ is a linear operator that maps both $0$ and $\mathcal{F}(u)$
to $0$; hence it is not invertible, and is not a deflation operator.

\item (preservation of other solutions). Suppose there exists $\tilde{r} \neq r$ such that $\mathcal{F}(\tilde{r}) = 0$.
Then $\mathcal{M}(\tilde{r}; r) \mathcal{F}(\tilde{r}) = 0$ by linearity of $\mathcal{M}(\tilde{r}; r)$.
\end{enumerate}

Informally, a deflation operator transforms the original equation $\mathcal{F}(u) = 0$ to
ensure that $r$ will not be found by any algorithm that uses the norm of the problem residual as
a merit function. Once a solution $r^{[0]}$ has been found (by any means), we form the new nonlinear
problem
\begin{align}
\mathcal{F}^{[1]}(u) \equiv \mathcal{M}(u; r^{[0]}) \mathcal{F}(u) = 0
\end{align}
and apply the rootfinding technique to $\mathcal{F}^{[1]}$. Clearly, this deflation procedure may be iterated
until the rootfinding technique diverges for some $\mathcal{F}^{[i]}$.

Brown and Gearhart introduced a \emph{deflation lemma} \cite[lemma 2.1]{brown1971} for determining
whether a matrix function can serve as a deflation matrix. We now extend this to deflation
operators on Banach spaces.

\begin{lemma}[sufficient condition for identifying deflation operators \cite{birkisson2013}] \label{lem:deflation}
Let $\mathcal{F}\!\!:\!\!U \subset V \rightarrow W$ be a Fr\'echet
differentiable operator. Suppose that the linear operator $\mathcal{M}(u; r)\!:\!W \rightarrow Z$ has the
property that for each $r \in U$, and any sequence $u_i~\overset{U}{\longrightarrow}~r, u_i \in U_r \equiv U\setminus\{r\}$, if
\begin{equation} \label{eqn:P}
\left| \left| u_i - r \right| \right| \mathcal{M}(u_i; r) w_i \overset{Z}\longrightarrow 0 \implies w_i \overset{W}\longrightarrow 0
\end{equation}
for any sequence $\left\{w_i\right\}, w_i \in W$, then $\mathcal{M}$
is a deflation operator.
\end{lemma}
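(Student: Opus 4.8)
The plan is to argue by contradiction, using the Fréchet differentiability of $\mathcal{F}$ at the known solution $r$ to reduce the assertion to hypothesis \eqref{eqn:P}. I would fix a Fréchet differentiable $\mathcal{F}$ with $\mathcal{F}(r) = 0$ and $\mathcal{F}'(r)$ nonsingular, fix a sequence $\{u_i\}$ with $u_i \in U_r$ and $u_i \to r$ in $V$, and assume for contradiction that $\liminf_{i\to\infty} \|\mathcal{M}(u_i;r)\mathcal{F}(u_i)\|_Z = 0$. Since this quantity is nonnegative, some subsequence --- which I relabel as $\{u_i\}$ --- satisfies $\|\mathcal{M}(u_i;r)\mathcal{F}(u_i)\|_Z \to 0$.

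The key step is to apply \eqref{eqn:P} to a rescaled copy of $\mathcal{F}(u_i)$. Since $u_i \neq r$, I would set $w_i \equiv \mathcal{F}(u_i)/\|u_i - r\|_V \in W$; then, by linearity of $\mathcal{M}(u_i;r)$,
\begin{equation*}
\|u_i - r\|_V\,\mathcal{M}(u_i;r)\, w_i = \mathcal{M}(u_i;r)\mathcal{F}(u_i) \longrightarrow 0 \quad\text{in } Z .
\end{equation*}
This is precisely the premise of the implication in \eqref{eqn:P} for this choice of $\{w_i\}$, so the hypothesis yields $w_i \to 0$ in $W$.

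To close the argument I would show that $w_i \to 0$ cannot hold. Put $e_i \equiv (u_i - r)/\|u_i - r\|_V$, so $\|e_i\|_V = 1$. Fréchet differentiability at $r$ and $\mathcal{F}(r) = 0$ give
\begin{equation*}
w_i \;=\; \mathcal{F}'(r)\, e_i \;+\; \frac{\mathcal{F}(u_i) - \mathcal{F}(r) - \mathcal{F}'(r)(u_i - r)}{\|u_i - r\|_V},
\end{equation*}
and the last term tends to $0$ in $W$ by the definition of the derivative; combined with $w_i \to 0$ this forces $\mathcal{F}'(r)\, e_i \to 0$ in $W$. Since $\mathcal{F}'(r)$ is nonsingular --- hence, consistently with the implicit-function-theorem setting of the introduction, boundedly invertible --- I get $1 = \|e_i\|_V = \|\mathcal{F}'(r)^{-1}\mathcal{F}'(r)\, e_i\|_V \le \|\mathcal{F}'(r)^{-1}\|\,\|\mathcal{F}'(r)\, e_i\|_W \to 0$, a contradiction. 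Therefore $\liminf_{i\to\infty}\|\mathcal{M}(u_i;r)\mathcal{F}(u_i)\|_Z > 0$, which is the defining property of a deflation operator.

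The one place I expect to need care is the meaning of ``nonsingular'' for $\mathcal{F}'(r)$: in an infinite-dimensional Banach space, injectivity of $\mathcal{F}'(r)$ alone does not deliver the lower bound $\|e_i\|_V \le \|\mathcal{F}'(r)^{-1}\|\,\|\mathcal{F}'(r)\, e_i\|_W$ used at the end, so the proof requires $\mathcal{F}'(r)$ to be a boundedly invertible isomorphism. This is exactly the assumption under which the Newton--Kantorovitch iteration and the Banach-space implicit function theorem operate, so it is the natural reading and no generality of interest is lost. The remaining ingredients --- extracting the subsequence from $\liminf = 0$ and bounding the differentiability remainder --- are routine.
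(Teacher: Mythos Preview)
Your proof is correct and follows essentially the same route as the paper: assume the $\liminf$ vanishes, pass to a subsequence, set $w_i = \mathcal{F}(u_i)/\|u_i - r\|$, invoke \eqref{eqn:P} to get $w_i \to 0$, and contradict nonsingularity of $\mathcal{F}'(r)$ via the unit vectors $(u_i - r)/\|u_i - r\|$. If anything, your final step is more carefully phrased than the paper's --- you make explicit that ``nonsingular'' must mean boundedly invertible so that the lower bound $\|e_i\|_V \le \|\mathcal{F}'(r)^{-1}\|\,\|\mathcal{F}'(r) e_i\|_W$ is available, whereas the paper simply asserts a contradiction with nonsingularity without spelling this out.
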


\begin{proof}
Assume \eqref{eqn:P} holds.
If $\mathcal{M}$ is not a deflation operator, then there exists a Fr\'echet differentiable operator
$\mathcal{F}\!\!:\!\!U \subset V \rightarrow W$ and an $r \in U$ such that $\mathcal{F}(r) = 0$, $\mathcal{F}'(r)$ nonsingular and
\begin{equation*}
\liminf_{i \rightarrow \infty} \left|\left| \mathcal{M}(u_i; r) \mathcal{F}(u_i) \right|\right|_Z = 0
\end{equation*}
for some sequence $\left\{u_i\right\}$ converging to $r$, $u_i \in U_r$.
Then there exists a subsequence
$\left\{v_i\right\}$ such that $\mathcal{M}(v_i; r) \mathcal{F}(v_i) \overset{Z}\longrightarrow 0$. Defining $\left\{w_i\right\} \in W$ such that
\begin{equation*}
w_i = \frac{\mathcal{F}(v_i)}{\left|\left|v_i - r\right|\right|_U},
\end{equation*}
we have
\begin{equation*}
\left|\left|v_i - r\right|\right|_U \mathcal{M}(v_i; r) w_i \overset{Z}\longrightarrow 0.
\end{equation*}
By \eqref{eqn:P}, $w_i \overset{W}\longrightarrow 0$, i.e.
\begin{equation} \label{eqn:punchline}
\frac{\mathcal{F}(v_i)}{\left|\left|v_i - r\right|\right|_U} \overset{W}\longrightarrow 0.
\end{equation}
Since $\mathcal{F}$ is Fr\'echet differentiable, we can expand it in a Taylor series around $r$ to give
\begin{align*}
\mathcal{F}(v_i) &= \mathcal{F}(r) + \mathcal{F}'(r; v_i - r) + o(\left|\left|v_i - r\right|\right|^2_U) \\
                 &= \mathcal{F}'(r; v_i - r) + o(\left|\left|v_i - r\right|\right|^2_U),
\end{align*}
as $\mathcal{F}(r) = 0$. We then have that
\begin{align*}
\frac{\mathcal{F}(v_i)}{\left|\left|v_i - r\right|\right|_U} &= \frac{1}{\left|\left|v_i - r\right|\right|_U} \left[\mathcal{F}'(r; v_i - r) + o(\left|\left|v_i - r\right|\right|^2_U)\right] \\
 &\approx \mathcal{F}'(r; \bar{v_i}),
\end{align*}
where
\begin{equation*}
\bar{v_i} = \frac{(v_i - r)}{\left|\left|v_i - r\right|\right|_U} \in U_r
\end{equation*}
is a function with unit norm for all $v_i \in U_r$. But then \eqref{eqn:punchline} leads to a contradiction of the nonsingularity of $\mathcal{F}'$.
\end{proof}

The utility of this lemma is that it allows us to define candidate deflation operators $\mathcal{M}(u; r)$ and check whether property
\eqref{eqn:P} holds; if it holds, then $\mathcal{M}(u; r)$ is indeed a deflation operator. In the next section, we introduce several classes of deflation operators
to which lemma \ref{lem:deflation} applies.

\subsection{Classes of deflation operators} \label{sec:classes}
The simplest kind of deflation, Brown and Gearhart's norm deflation, extends naturally to the infinite-dimensional case.
\begin{definition}[norm deflation]
Norm deflation specifies
\begin{equation} \label{eqn:norm_deflation}
\mathcal{M}(u; r) = \frac{\mathcal{I}}{\left|\left|u - r\right|\right|_{{U}}},
\end{equation}
where $\mathcal{I}$ is the identity operator on $W$.
\end{definition}

That \eqref{eqn:norm_deflation} defines a deflation operator follows straightforwardly from lemma
\ref{lem:deflation}.

In computational practice it has occasionally proven useful to consider a generalisation of norm
deflation, where the norm is taken to a power:
\begin{definition}[exponentiated-norm deflation]
Exponentiated-norm deflation specifies
\begin{equation} \label{eqn:exponentiated_norm_deflation}
\mathcal{M}_p(u; r) = \frac{\mathcal{I}}{\left|\left|u - r\right|\right|^{{p}}_{{U}}},
\end{equation}
where $p \in \mathbb{R} \ge 1$.
\end{definition}

For example, in the Painlev\'e example of section \ref{sec:painleve}, the globalised
Newton algorithm fails to find the second solution when norm deflation is used, but succeeds
when squared-norm deflation $(p=2)$ is used. All examples below use either $p = 1$ or $p = 2$.

\begin{figure}
\centering
\includegraphics[width=\textwidth]{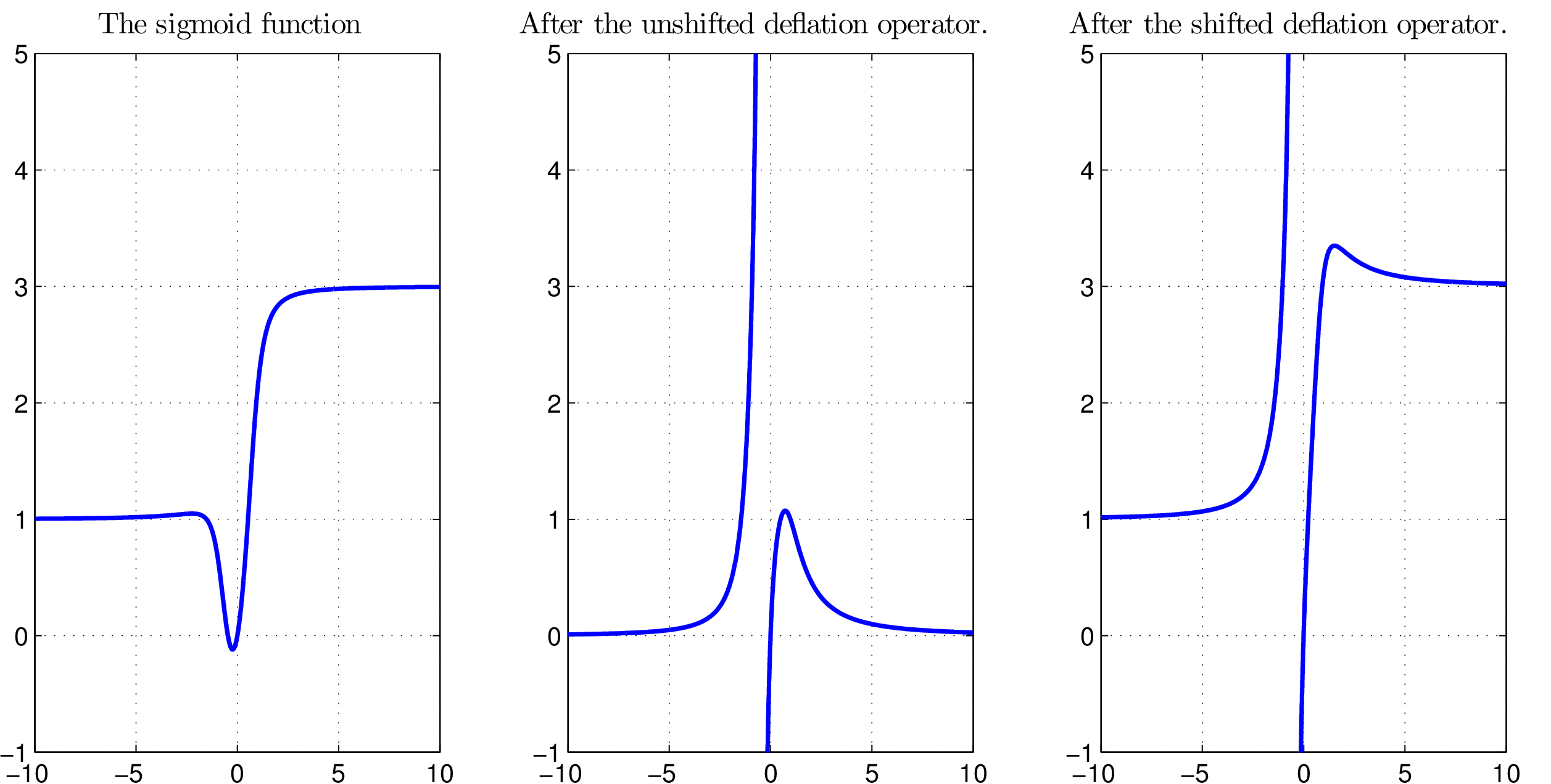}
\caption{Left: a plot of the sigmoid function \eqref{eqn:sigmoid} used to motivate the development
of shifted deflation. Centre: a plot of the function obtained after deflating the solution $x^{[0]}\approx-0.464$ with $p = 2$ as in \eqref{eqn:exponentiated_norm_deflation}.
The deflated function tends to zero away from the deflated solution, even where the original function did not. Right: a plot of the function obtained
after deflating the solution $x^{[0]}$ with $p = 2$, $\alpha = 1$ as in \eqref{eqn:shifted_deflation}. The deflated function tends to
the original function away from the deflated solution, due to the addition of the shift.}
\label{fig:sigmoid}
\end{figure}

While often successful, this class of deflation operator can sometimes induce numerical difficulties
for a nonlinear rootfinding algorithm: the rootfinding algorithm can erroneously report
convergence due to small residuals, even when $u$ is far away from a solution. This can be clearly seen
in the following example.  Consider the problem of finding the solutions of the sigmoid function
\begin{equation} \label{eqn:sigmoid}
    f(x) = \frac{x}{\sqrt{1 + x^2}} + \frac{2x^2}{\sqrt{1 + x^4}}.
\end{equation}
This function has two roots at $x=0$ and $x = -\sqrt{\frac{1}{3}\left(\sqrt{7} - 2\right)} \approx
-0.464$, and is plotted in figure \ref{fig:sigmoid} (left).
Starting from the
initial guess $x=-1$, undamped Newton iteration converges to the root $x \approx -0.464$ in 5 iterations. Suppose
exponentiated norm deflation is applied to this problem with $p=2$. When the Newton iteration is
applied for a second time starting from $x=-1$, the algorithm finds that it can make the norm of the
residual arbitrarily small by pushing $x$ towards $-\infty$: at $x \approx -1.2 \times 10^8$, the
deflated residual has norm on the order of $10^{-13}$, and the algorithm erroneously reports
successful convergence with a small residual (figure \ref{fig:sigmoid}, centre). While this example
is artificially constructed, similar erroneous behaviour has been observed in practice for more complicated problems,
such as the Allen--Cahn example of section \ref{sec:allen_cahn}.

This phenomenon of the deflation factor causing the residual to go to zero away from
the solution motivates the development of shifted deflation:
\begin{definition}[shifted deflation]
Shifted deflation specifies
\begin{equation} \label{eqn:shifted_deflation}
\mathcal{M}_{p, \alpha}(u; r) = \frac{\mathcal{I}}{\left|\left|u - r\right|\right|^{{p}}_{{U}}} + \alpha \mathcal{I},
\end{equation}
where $\alpha \ge 0$ is the shift.
\end{definition}

It follows from lemma \ref{lem:deflation} that $\mathcal{M}_{p, \alpha}(u; r)$ is also a
deflation operator.
The extra term $\alpha \mathcal{I}, \alpha > 0$ ensures that the norm of the deflated residual
does not artificially go to zero as $\left|\left|u - r\right|\right| \longrightarrow \infty$ (figure
\ref{fig:sigmoid}, right).
Instead, far away from previously found roots we have $\mathcal{M}_{p, \alpha}(u; r)
\mathcal{F}(u) \approx \alpha \mathcal{F}(u)$.
The observation that deflation scales the residual by $\alpha$ far away from deflated roots
immediately suggests a natural default value of $\alpha = 1$, as opposed to the
value of $\alpha = 0$ implicit in the deflation operators of Brown and
Gearhart. However, computational experience occasionally rewards choosing other
values of $\alpha$ (and scaling the termination criteria of the solver appropriately);
this is further discussed in the Yamabe example of section \ref{sec:yamabe}.

While the theory permits deflation operators built on invertible linear
operators other than the identity, we do not consider this further, as
all of the solution methods employed in this work are affine covariant.
Suppose we construct two deflation operators
\begin{equation}
\mathcal{M}_{\mathcal{I}} = \frac{\mathcal{I}}{\left|\left| u - r \right|\right|},
\end{equation}
and
\begin{equation}
\mathcal{M}_{\mathcal{A}} = \frac{\mathcal{A}}{\left|\left| u - r \right|\right|} = \mathcal{A} \mathcal{M}_{\mathcal{I}}
\end{equation}
for some invertible linear operator $\mathcal{A}$. Newton's method is \emph{affine covariant}:
considering a residual $\mathcal{A} \mathcal{F}(u)$ instead of $\mathcal{F}(u)$ does not change the
Newton iterates \cite{deuflhard2011}. Thus, deflation will yield the same iterates for both deflation
operators (up to roundoff), for all of the solution methods considered in this work.
However, it may be advantageous to consider other kinds of
deflation operators in the case where a non-affine-covariant solution algorithm
must be employed. For the remainder of this paper we only consider deflation
operators built on the identity.

\subsection{Summary of this section}
Definition \ref{def:deflation} describes the property that we
demand of a deflation operator that works for PDEs; lemma \ref{lem:deflation}
gives a sufficient condition for being a deflation operator that is easy to
test; and section \ref{sec:classes} introduces several kinds of deflation
operator to which lemma \ref{lem:deflation} applies. 

However, none of this would be practically useful for finding distinct solutions
if the key computational kernel of Newton's method, the solution of a linear
system involving the Jacobian arising from some discretisation, cannot be made
to scale up to large problems in the presence of deflation. We now turn our
attention from theoretical concerns to the details required to successfully
implement deflation for large-scale discretised problems.

\section{Implementation: sparsity and preconditioning} \label{sec:preconditioning}
Exploiting sparsity of the (discretised) Jacobian in the Newton step is critical for computational
efficiency for many discretisation techniques for PDEs, such as finite differences and finite elements.
Suppose the application of deflation yields
\begin{equation*}
\mathcal{G}(u) = \frac{\mathcal{F}(u)}{\eta(u)},
\end{equation*}
where the exact form of $\eta:U \mapsto \mathbb{R}$ depends on the kind of deflation operator employed and the
number of solutions deflated. ($\eta(u)$ may include multiple deflations.) The action of the Fr\'echet derivative
of $\mathcal{G}$ in a direction $\delta u$ is given by
\begin{equation*}
\mathcal{G}'(u; \delta u) = \frac{\mathcal{F}'(u; \delta u)}{\eta(u)} - \frac{\mathcal{F}(u)}{\eta(u)^2} \eta'(u; \delta u).
\end{equation*}

Suppose the problem is discretised so that the solution $\vec{u}$ is sought in $\mathbb{R}^N$, where
$N$ is the number of degrees of freedom.  The discretisation $J_G \in \mathbb{R}^{N \times N}$ of
$\mathcal{G}'(u)$ will be dense even if the discretisation $J_F \in \mathbb{R}^{N \times N}$ of
$\mathcal{F}'(u)$ is sparse, as $J_G$ is a rank-one perturbation of $J_F$:
\begin{equation} \label{eqn:deflated_jacobian}
J_G(u) = \frac{J_F(\vec{u})}{\eta(\vec{u})} - \frac{\vec{F}(u)}{\eta(\vec{u})^2} \otimes \vec{d(u)}
\end{equation}
where $\vec{F}(u) \in \mathbb{R}^N$ is the discretisation of $\mathcal{F}(u)$ and $\vec{d}(u) \in
\mathbb{R}^N$ is the discretisation of $\eta'(u; \cdot)$.

This has two practical implications for a software implementation. First, the discretised deflated Jacobian
$J_G$ should not be stored explicitly, but rather its action should be computed matrix-free via
\eqref{eqn:deflated_jacobian}. Second, some care must be taken in preconditioning linear systems
involving $J_G$ (when Newton--Krylov methods are used). Suppose that a left preconditioner $P_F$ is
available for the matrix $J_F$, i.e.\@ that the undeflated Newton--Krylov method approximately solves
\begin{equation} \label{eqn:undeflated_newton_system}
P_F^{-1} J_F \vec{x} = P_F^{-1} \vec{b}
\end{equation}
using some Krylov method. Neglecting dependence on $u$ for notational brevity, this suggests using
the preconditioner
\begin{equation}
P_G = \frac{P_F}{\eta} - \frac{\vec{F}}{\eta^2} \otimes \vec{d},
\end{equation}
for the deflated linear system:
\begin{equation} \label{eqn:deflated_newton_system}
P_G^{-1} J_G \vec{x} = P_G^{-1} \vec{b}.
\end{equation}
The action of $P_G^{-1}$ may be computed via the Sherman--Morrison formula
\cite{bartlett1951,hager1989}:
\begin{equation*}
(\frac{P_F}{\eta} + \vec{f}\vec{d}^T)^{-1} = \eta P_F^{-1} - \eta^2 \frac{P_F^{-1} \vec{f} \vec{d}^T P_F^{-1}}{1 + \eta \vec{d}^T P_F^{-1} \vec{f}},
\end{equation*}
where $\vec{f} = -{\vec{F}}/{\eta^2}$, provided $1 + \eta \vec{d}^T P_F^{-1} \vec{f} \ne 0$. This
allows for the matrix-free computation of actions of $P_G^{-1}$ in terms of actions of $P_F^{-1}$.

We hope that if $P_F$ is a useful preconditioner for $J_F$, then $P_G$ will be a useful preconditioner
for $J_G$. Expanding, we find
\begin{align*}
P_G^{-1} J_G &= (\frac{P_F}{\eta} + \vec{f}\vec{d}^T)^{-1} (\frac{J_F}{\eta} + \vec{f}\vec{d}^T) \\
             &= (\eta P_F^{-1} - \eta^2 \frac{P_F^{-1} \vec{f} \vec{d}^T P_F^{-1}}{1 + \eta \vec{d}^T P_F^{-1} \vec{f}}) (\frac{J_F}{\eta} + \vec{f}\vec{d}^T) \\
             &= P_F^{-1} J_F + \eta P_F^{-1} \vec{f}\vec{d}^T - \frac{\eta P_F^{-1} \vec{f} \vec{d}^T P_F^{-1}J_F + \eta^2 P_F^{-1} \vec{f} \vec{d}^T P_F^{-1}\vec{f}\vec{d}^T}{1 + \eta \vec{d}^T P_F^{-1} \vec{f}} \\
             &= P_F^{-1} J_F + \eta P_F^{-1} \vec{f}\vec{d}^T - \frac{\eta P_F^{-1} \vec{f} \vec{d}^T + \eta^2 P_F^{-1} \vec{f} \vec{d}^T P_F^{-1}\vec{f}\vec{d}^T}{1 + \eta \vec{d}^T P_F^{-1} \vec{f}}
                                                         - \frac{\eta P_F^{-1} \vec{f} \vec{d}^T\left(P_F^{-1} J_F - I\right)}{1 + \eta \vec{d}^T P_F^{-1} \vec{f}} \\
             &= P_F^{-1} J_F + \eta P_F^{-1} \vec{f}\vec{d}^T - \frac{\eta P_F^{-1} \vec{f}\left(1 + \eta \vec{d}^T P_F^{-1} f\right) \vec{d}^T}{1 + \eta \vec{d}^T P_F^{-1} \vec{f}}
                                                         - \frac{\eta P_F^{-1} \vec{f} \vec{d}^T\left(P_F^{-1} J_F - I\right)}{1 + \eta \vec{d}^T P_F^{-1} \vec{f}} \\
             &= P_F^{-1} J_F - \frac{\eta P_F^{-1} \vec{f} \vec{d}^T\left(P_F^{-1} J_F - I\right)}{1 + \eta \vec{d}^T P_F^{-1} \vec{f}}.
\end{align*}
This expression has two consequences. First, if $P_F^{-1} = J_F^{-1}$ (computed with an LU or Cholesky decomposition) so that
\eqref{eqn:undeflated_newton_system} is solved in one iteration, then
\eqref{eqn:deflated_newton_system} will also be solved in one iteration, as the error term is multiplied by $P_F^{-1} J_F - I$.
Second, on taking matrix norms, we find
\begin{align*}
\left|\left|P_G^{-1} J_G - P_F^{-1} J_F\right|\right| &=   \left|\frac{\eta}{1 + \eta \vec{d}^T P_F^{-1} \vec{f}}\right| \left|\left|{P_F^{-1} \vec{f} \vec{d}^T \left(P_F^{-1} J_F - I\right)}\right|\right| \\
                                                      &\le \left|\frac{\eta}{1 + \eta \vec{d}^T P_F^{-1} \vec{f}}\right| \left|\left|P_F^{-1} \vec{f} \vec{d}^T\right|\right| \left|\left|P_F^{-1} J_F - I\right|\right|.
\end{align*}
We are mainly interested in the performance of the deflated preconditioner
away from previous roots, as that is where the deflated Newton iteration will
take us. Let us examine how each term scales with large $\eta$ ($\eta \approx \alpha^{-1}$ may be large
if $\alpha \ll 1$). The vector $f$ scales with $\eta^{-2}$, so the denominator of the first term is
approximately 1, and the first term scales linearly with $\eta$. The second term scales with $\eta^{-2}$,
and the third term is independent of $\eta$. Therefore, for large $\eta$, we expect the 
difference between the deflated and undeflated preconditioned operators will be
small if $P_F$ is a good preconditioner for $J_F$ in the sense that
$\left|\left|P_F^{-1} J_F - I\right|\right|$ is small. 

Computational experience with this preconditioned Newton--Krylov method is
reported in the examples of sections \ref{sec:allen_cahn} and \ref{sec:yamabe}.
In practice, the number of Krylov iterations required is observed not to grow
as more and more solutions are deflated.

\section{Examples}
\subsection{Special functions: Painlev\'e} \label{sec:painleve}

\begin{figure}
\centering
\includegraphics[width=\textwidth]{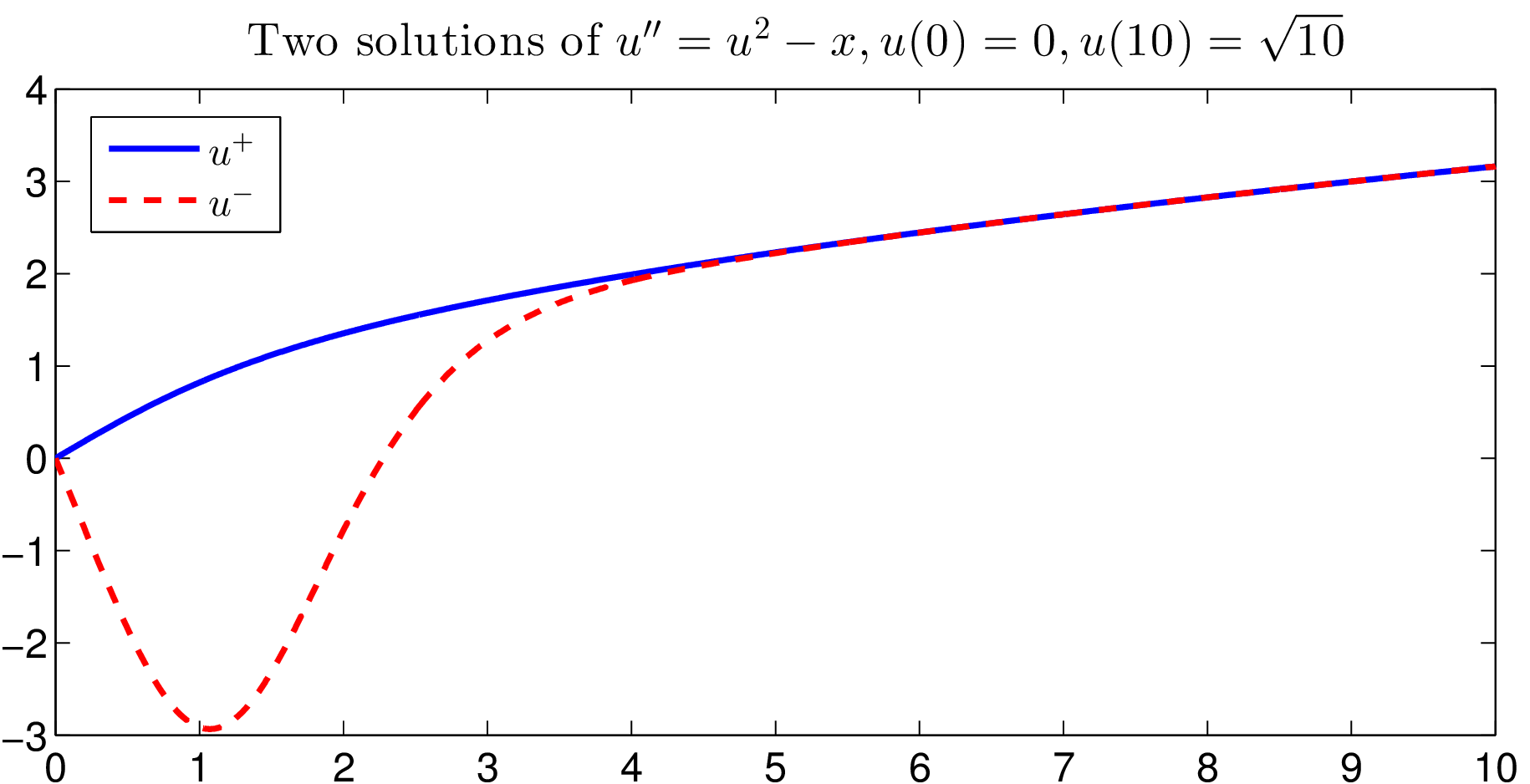}
\caption{The positive- and negative-slope solutions $u^+$ and $u^-$ of the \Painleve boundary value problem \eqref{eqn:painleveTrunc}.}
\label{fig:painleveheavenhell}
\end{figure}

A well-studied example of an ordinary differential equation boundary-value problem that permits
distinct solutions is based on the first \Painleve transcendent \cite{Holmes:1984:PTB,
Noonburg:1995:SSP, Fornberg:2011:NMP}, and is given by
\begin{equation}\label{eqn:painleveAB}
\dfrac{\mathrm{d}^2u}{\mathrm{d}x^2} = u^2 - x, \quad u(0) = 0, \quad u(x) \sim \sqrt{x} \text{ as } x \rightarrow \infty,
\end{equation}
where the latter condition means that the solution should asymptote to $\sqrt{x}$.
In \cite{Art:HastingsTroy:1989}, it was shown that exactly two solutions exist that satisfy these
conditions, one with a positive slope at $x=0$, and the other with a negative slope at $x=0$. We refer to these solutions as $u^+$ and $u^-$.
The first solution is easy to compute, while computing the second is far more computationally challenging: almost all convergent initial guesses attempted converge to $u^+$. Here, we truncate \eqref{eqn:painleveAB} to the interval
$[0, 10]$, and consider the task of finding the two solutions to the boundary-value problem:
\begin{equation}\label{eqn:painleveTrunc}
\dfrac{\mathrm{d}^2u}{\mathrm{d}x^2} = u^2 - x, \quad u(0) = 0, \quad u(10) = \sqrt{10}.
\end{equation}

Using the \verb|chebop| class of Chebfun \cite{Birkisson2012, Book:ChebfunGuide}, the $u^+$ solution can be obtained via spectral methods \cite{Book:Trefethen}, using the affine-covariant
globalised \texttt{NLEQ-ERR} Newton algorithm of \cite[pg. 148]{deuflhard2011} in function space.
All linear systems arising were solved with the \texttt{lu} function of \textsc{Matlab}.
The initial guess for the Newton iteration was set to the linear function that satisfies the boundary
conditions, and solved with the following lines of \textsc{Matlab} code:
\begin{lstlisting}
% Define the interval to solve the problem on:
domain = [0, 10];
% Construct a chebop object, representing the differential operator:
N = chebop(@(x,u) diff(u, 2) - u.^2 + x, domain);
% Impose Dirichlet boundary conditions:
N.lbc = 0; N.rbc = sqrt(10);
% Solve using overloaded \ method:
uplus = N\0;
\end{lstlisting}

No damping was required for the Newton iteration to converge from the initial guess used. Deflation was then applied with $p = 2$ and $\alpha = 0$, and the $u^-$ solution was computed starting from the same initial guess. Globalised damped Newton iteration was applied to achieve convergence to the $u^-$ solution,
requiring 12 steps to converge. (Deflation with $p=1$ failed to converge, even with globalisation.)
In this case, globalisation was necessary: applying the undamped exact Newton iteration to the deflated problem
diverged for every value of $p$ and $\alpha$ attempted.  The two solutions obtained are plotted in
figure \ref{fig:painleveheavenhell}.

\subsection{Comparison with a domain decomposition appoach: Hao et  al.} \label{sec:hao}
Hao et al.\ \cite{hao2014} have recently proposed an entirely different algorithm
for finding multiple solutions of PDEs based on domain decomposition and the
application of techniques from numerical algebraic geometry. The algorithm is
restricted to differential equations with polynomial nonlinearities: it solves
the polynomial systems arising from subdomains with sophisticated numerical algebraic
geometry software \cite{bates2013}, and then employs continuation in an attempt to
find solutions of the original discretised system. The authors do not claim that their
approach finds all solutions. In this section, we consider
example 1 of \cite{hao2014}, and compare the efficiency of deflation against
the algorithm presented there.

The problem to be solved is
\begin{equation} \label{eqn:hao2014}
u_{xx} = - \lambda (1 + u^4) \quad \textrm{ on } \Omega = (0, 1),
\end{equation}
with a homogeneous Neumann condition on the left and a homogeneous Dirichlet
boundary condition on the right. Hao et al.\ showed that for $0 < \lambda <
\lambda^* \approx 1.30107$, this problem has two solutions; at $\lambda =
\lambda^*$, it has one solution, and above $\lambda^*$ it has no solutions.

The equation was discretised with piecewise linear finite elements, yielding a
nonlinear system with $\mathcal{O}(100)$ degrees of freedom. The nonlinear
system was solved with undamped Newton iteration, and the arising linear systems
were solved with MUMPS. Deflation ($p = 1$, $\alpha = 1$) was applied to find
both solutions, starting from the same initial guess of zero. The deflation
approach found both solutions for $\lambda = 1.2$ in approximately one second on
a single core of a laptop computer. By contrast, the algorithm presented by Hao
et al.\ applied to a discretisation with $\mathcal{O}(100)$ degrees of freedom
took approximately 5 hours and 39 minutes on 96 cores \cite[table 1]{hao2014}.
While the approach of Hao at al.\ may find solutions that deflation does not
(and vice versa), its cost appears to be prohibitive.

\subsection{Phase separation: Allen--Cahn} \label{sec:allen_cahn}
The Allen--Cahn equation \cite{allen1979} was proposed to model the motion
of boundaries between phases in alloys, and is well-known to permit several
steady solutions.

The equation considered is the steady Allen--Cahn equation:
\begin{align} \label{eqn:ac}
-\delta \nabla^2 u + {\delta}^{-1}(u^3 - u) = 0,
\end{align}
where $u = +1$ corresponds to one material, and $u = -1$ the other, and $\delta = 0.04$ is a
parameter relating the strength of the free surface tension to the potential term in the
free energy. The equation is solved on the unit square $\Omega = (0, 1)^2$ with boundary conditions:
\begin{align} \label{eqn:acbcs}
\begin{split}
u &= +1 \quad \mathrm{on} \ x = 0, \ x = 1, \ 0 < y < 1, \\
u &= -1 \quad \mathrm{on} \ y = 0, \ y = 1.
\end{split}
\end{align}

The equation was discretised with piecewise linear finite elements on a $100 \times 100$ mesh via
FEniCS \cite{logg2011}.  Undamped Newton iteration was applied to solve the nonlinear problem.

This example was studied by \cite{e2004} in the context of identifying minimal action pathways and
transition times between stable states. In the absence of noise, a system evolves to one of its
stable states and stays there indefinitely; by contrast, when stochastic noise is present, the
system may switch between metastable states when the noise perturbs the system out of the basin of
attraction for its steady solution. The string method of E, Ren and Vanden--Eijnden
\cite{e2002,e2004,e2007} attempts to identify the most probable transition path between two
metastable states, and hence to characterise the transition timescales. However, this requires
some foreknowledge of the steady solutions, so that the endpoints of the string may be initialised
appropriately. Deflation may be used to identify the different solutions of the associated steady
problem.

\begin{figure}
\centering
  \begin{subfigure}[b]{0.3\textwidth}
    \includegraphics[width=\textwidth]{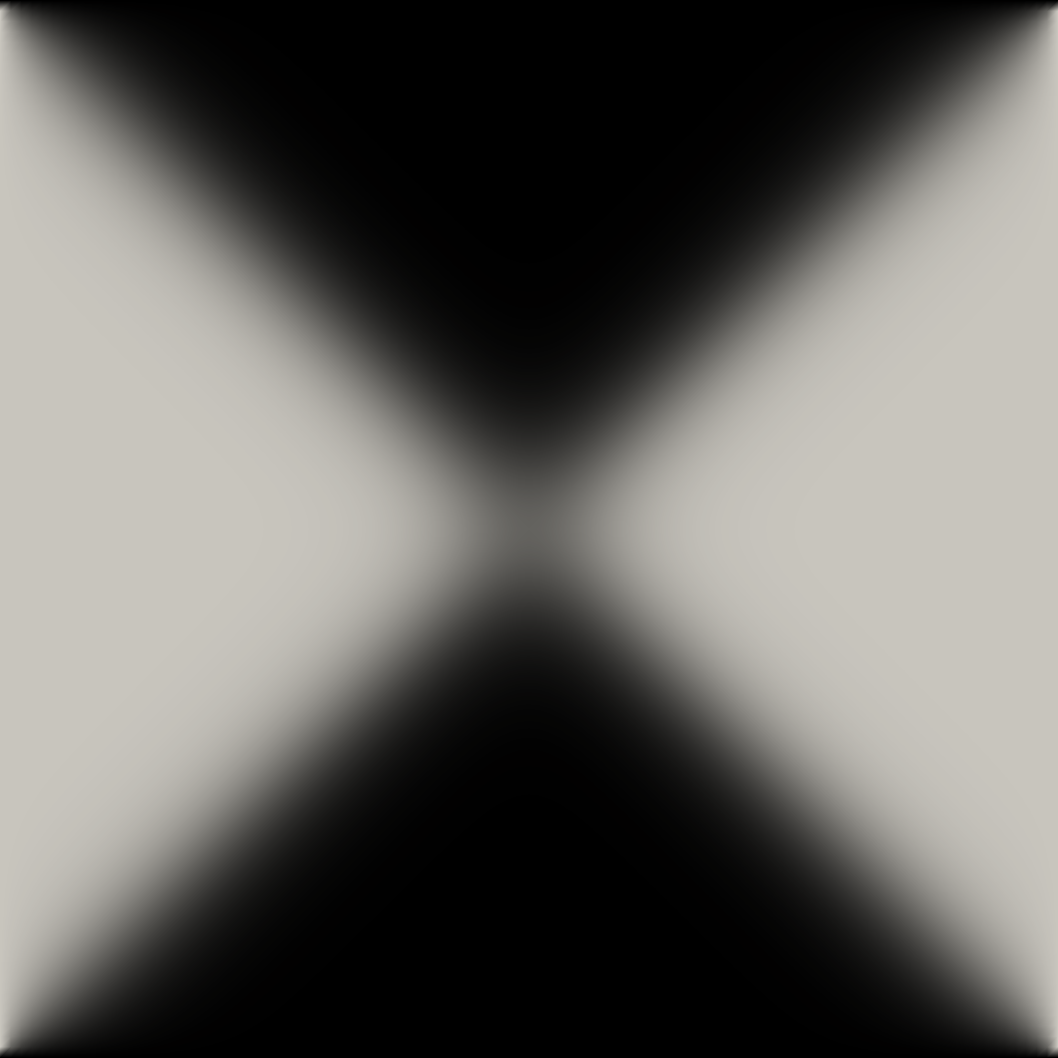}
    \caption{Unstable solution}
    \label{fig:allen_cahn_0}
  \end{subfigure}
  \begin{subfigure}[b]{0.3\textwidth}
    \includegraphics[width=\textwidth]{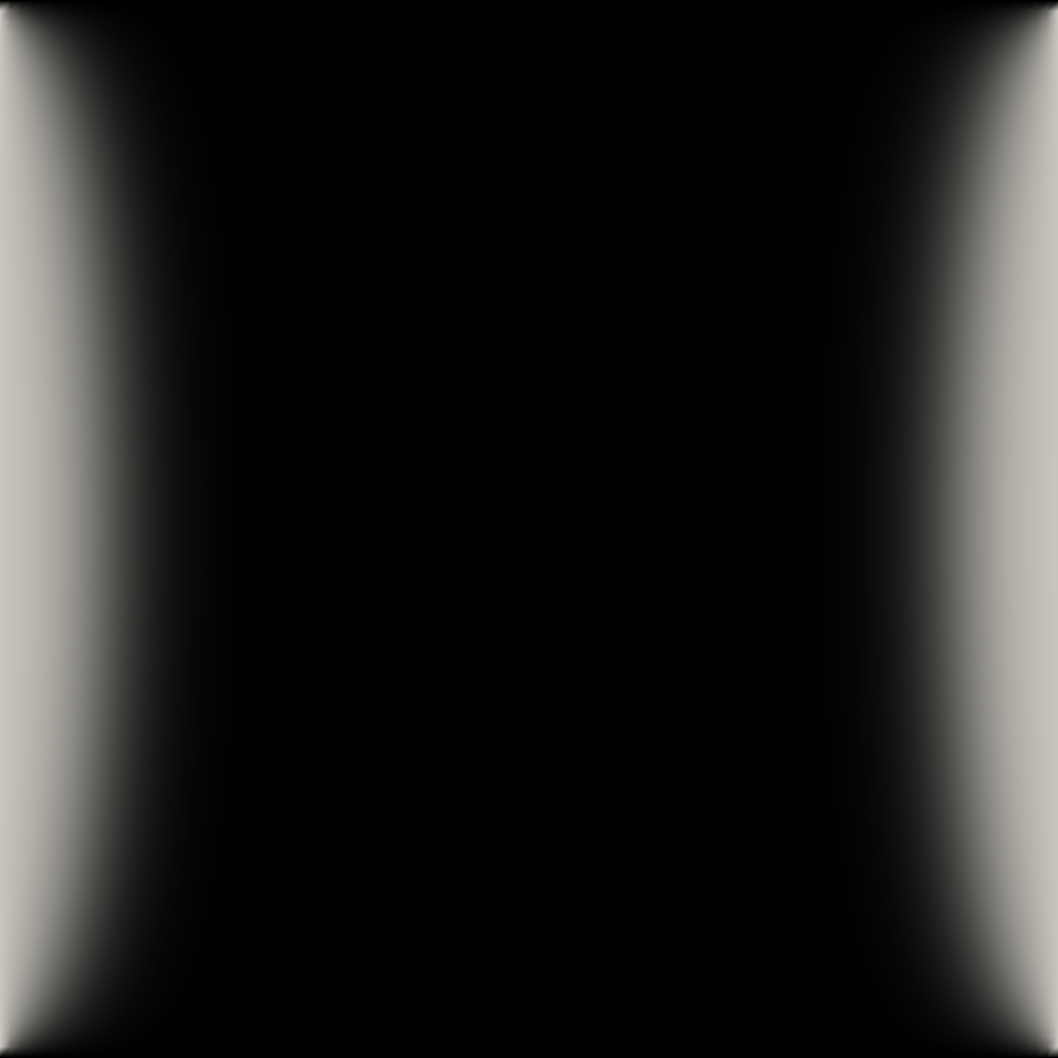}
    \caption{Negative stable solution}
    \label{fig:allen_cahn_1}
  \end{subfigure}
  \begin{subfigure}[b]{0.3\textwidth}
    \includegraphics[width=\textwidth]{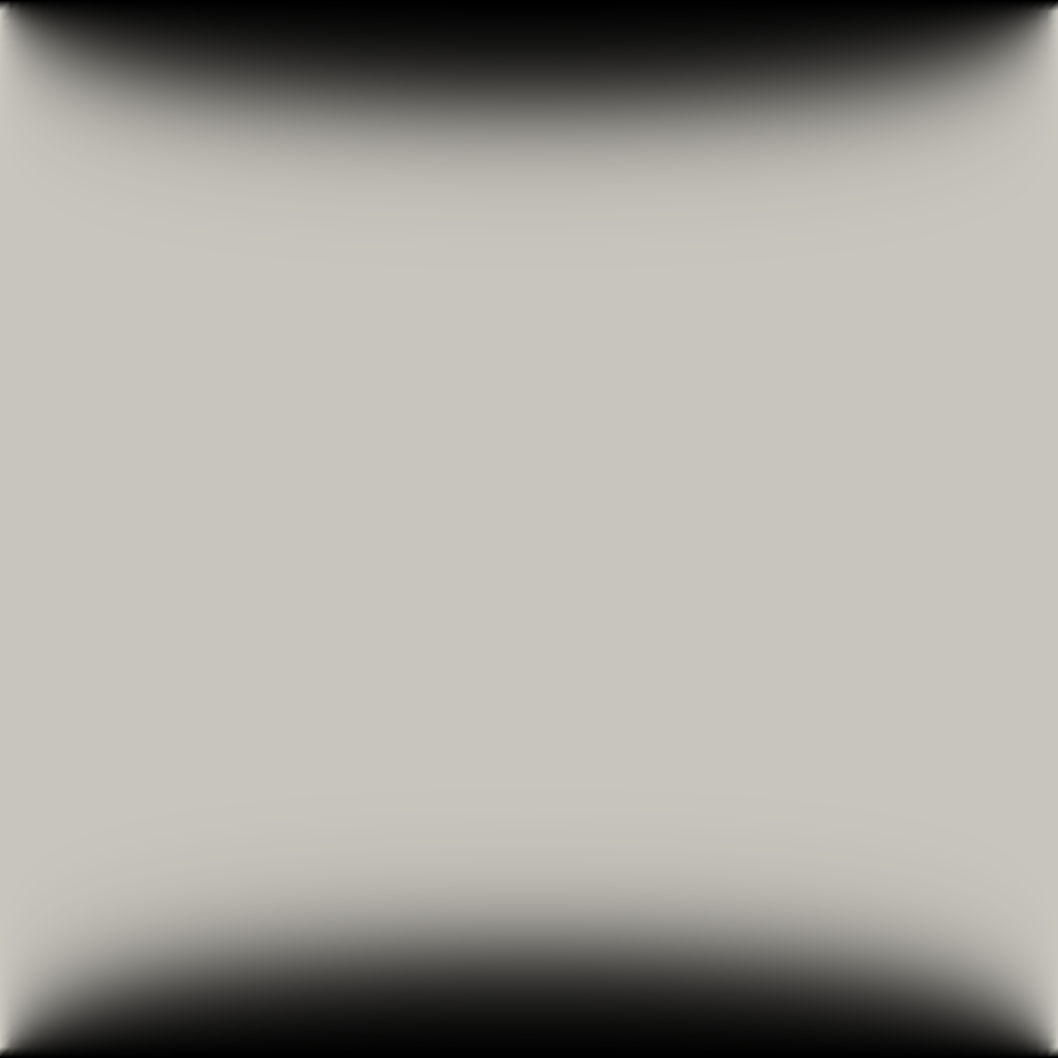}
    \caption{Positive stable solution}
    \label{fig:allen_cahn_2}
  \end{subfigure}
  \caption{Solutions of the Allen--Cahn equation \eqref{eqn:ac}--\eqref{eqn:acbcs}; c.f. figure 4.1 of \cite{e2004}, left column.}
  \label{fig:allen_cahn}
\end{figure}

\begin{table}
\centering
\begin{tabular}{c|c}
\toprule
\# of deflations & average Krylov iterations per solve \\
\midrule
0 & 10.84 \\
1 & 10.58 \\
2 & 10.53 \\
\bottomrule
\end{tabular}
\caption{The performance of the preconditioning strategy of section \ref{sec:preconditioning}
on the Allen--Cahn problem, section \ref{sec:allen_cahn}. As more solutions are deflated, the
number of Krylov iterations required does not increase, indicating that the
preconditioning strategy suggested is effective.}
\label{tab:allen_cahn_preconditioning}
\end{table}

Starting each time from an initial guess of zero, Newton's method with deflation ($p=1$, $\alpha=0$) finds
three solutions (figure \ref{fig:allen_cahn}): one unstable solution (figure \ref{fig:allen_cahn_0}), and two
stable solutions (figures \ref{fig:allen_cahn_1}, \ref{fig:allen_cahn_2}). This information about the
stable solutions would be sufficient to initialise the string method to find the minimum energy pathway
(which, in this case, happens to pass through the unstable solution).

The linear systems arising in the Newton algorithm were solved using GMRES \cite{saad1986} and the
GAMG classical algebraic multigrid algorithm \cite{adams2004} via PETSc \cite{balay1997}, with two
smoother iterations of Chebyshev and SOR. The relative and absolute Krylov solver tolerances were
both set to $10^{-12}$.  The average number of Krylov iterations required in the Newton iterations
for each solution are listed in table \ref{tab:allen_cahn_preconditioning}. The number of Krylov iterations
required for the deflated solves stays approximately constant. This suggests that the
preconditioning strategy proposed in section \ref{sec:preconditioning} is effective, even as
several solutions are deflated.

This example was further used to investigate the convergence of Newton's method
for different values of the deflation parameters. Of course, many factors
influence whether Newton's method will converge, even if a solution exists, including:
\begin{itemize}
\item the
proximity of the initial guess; 
\item the globalisation technique employed (e.g. $L^2$, critical point, backtracking or \texttt{NLEQ-ERR} linesearch); 
\item the use of Jacobian lagging \cite{ortega2000,brown2013}; 
\item the use of nonlinear preconditioning techniques \cite{brune2013}.
\end{itemize}
This makes a systematic search through the design space
of deflation algorithms both intractable and somewhat irrelevant (for the
results will differ completely for a different nonlinear problem, or even a
different initial guess).  Nevertheless, we remove as many degrees of freedom as
possible by fixing the algorithm as undamped unlagged Newton's method with exact
linear solves (up to roundoff) and 100 maximum iterations, and report the number
of roots found while varying $p$ and $\alpha$.

\begin{table}
\centering
\begin{tabular}{c|c}
\toprule
configuration $(p, \alpha)$ & number of solutions found \\
\midrule
$(1, 0)$ & 3 \\
$(1, 0.1)$ & 3 \\
$(1, 1)$ & 2 \\
$(2, 0)$ & $2^*$ \\
$(2, 0.1)$ & 2 \\
$(2, 1)$ & 3 \\
\bottomrule
\end{tabular}
\caption{Number of solutions found for various deflation parameters for the
Allen--Cahn problem of section \ref{sec:allen_cahn}. The $^*$ for $(2, 0)$
indicates that Newton's method erroneously reported convergence to a third
root, but it was spurious for the reasons illustrated in figure
\ref{fig:sigmoid}.}
\label{tab:allen_cahn_solution_count}
\end{table}

The results are shown in table \ref{tab:allen_cahn_solution_count}. For each
value of $p$ considered, there is a shift with which it finds 2 or 3 solutions;
similarly, for each shift considered there is a $p$ with which it finds 2 or 3
solutions. It seems difficult to predict in advance which values of $(p,
\alpha)$ will find all solutions. However, the deflation approach
finds more than 1 solution for all cases considered, indicating its partial success.

\subsection{Differential geometry: Yamabe} \label{sec:yamabe}
In 1960, Hidehiko Yamabe \cite{yamabe1960} posed the following problem: given a compact manifold $M$ of dimension $n \ge 3$
with Riemannian metric $g$, is it possible to find a metric $\tilde{g}$ conformal to $g$ (a multiplication of $g$
by a positive function) that has constant scalar curvature? Yamabe showed that solving this problem
is equivalent to finding a $u$ such that
\begin{equation} \label{eqn:yamabe_generic}
-a \nabla^2 u - S u + \lambda u^{p-1} = 0,
\end{equation}
where $a = 4(n-1)/(n-2)$, $p = 2n/(n-2)$, $S$ is the scalar curvature of $g$, and $\lambda$ is the constant
scalar curvature of $\tilde{g}$. This problem has a solution; for a full account, see \cite{lee1987}. This is
an instance of a more general class of critical exponent problems \cite{brezis1983,erway2011}, which often permit
distinct solutions.

Following \cite{erway2011}, we consider a Yamabe-like problem arising from $n=3$, but posed on
a two-dimensional domain $\Omega$:
\begin{equation} \label{eqn:yamabe}
-8 \nabla^2 u - \frac{1}{10} u + \frac{1}{r^3} u^5 = 0,
\end{equation}
where $\Omega$ is an annulus centred on the origin of inner radius 1 and outer radius 100, and $r$
is the distance to the origin. The system is closed by imposing that $u = 1$ on the boundaries. In
\cite{erway2011}, the authors are concerned with devising schemes to solve the nonlinear problem
\eqref{eqn:yamabe} subject to the constraint that $u \ge 0$; here, the nonnegativity constraint
is relaxed, and solutions of any sign are sought using deflation.

\begin{figure}
\begin{tabular}{ccc}
\includegraphics[width=3.9cm]{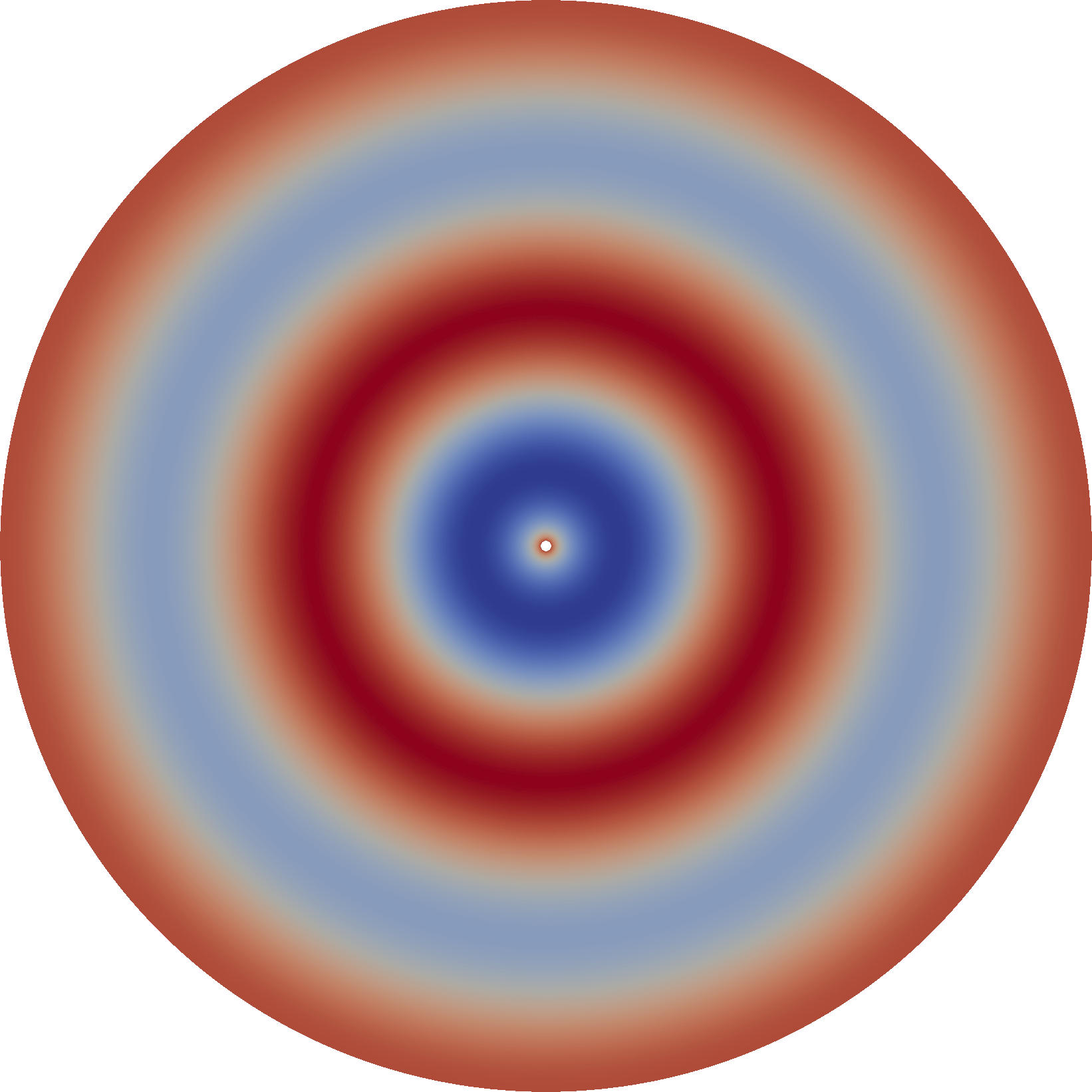} &
\includegraphics[width=3.9cm]{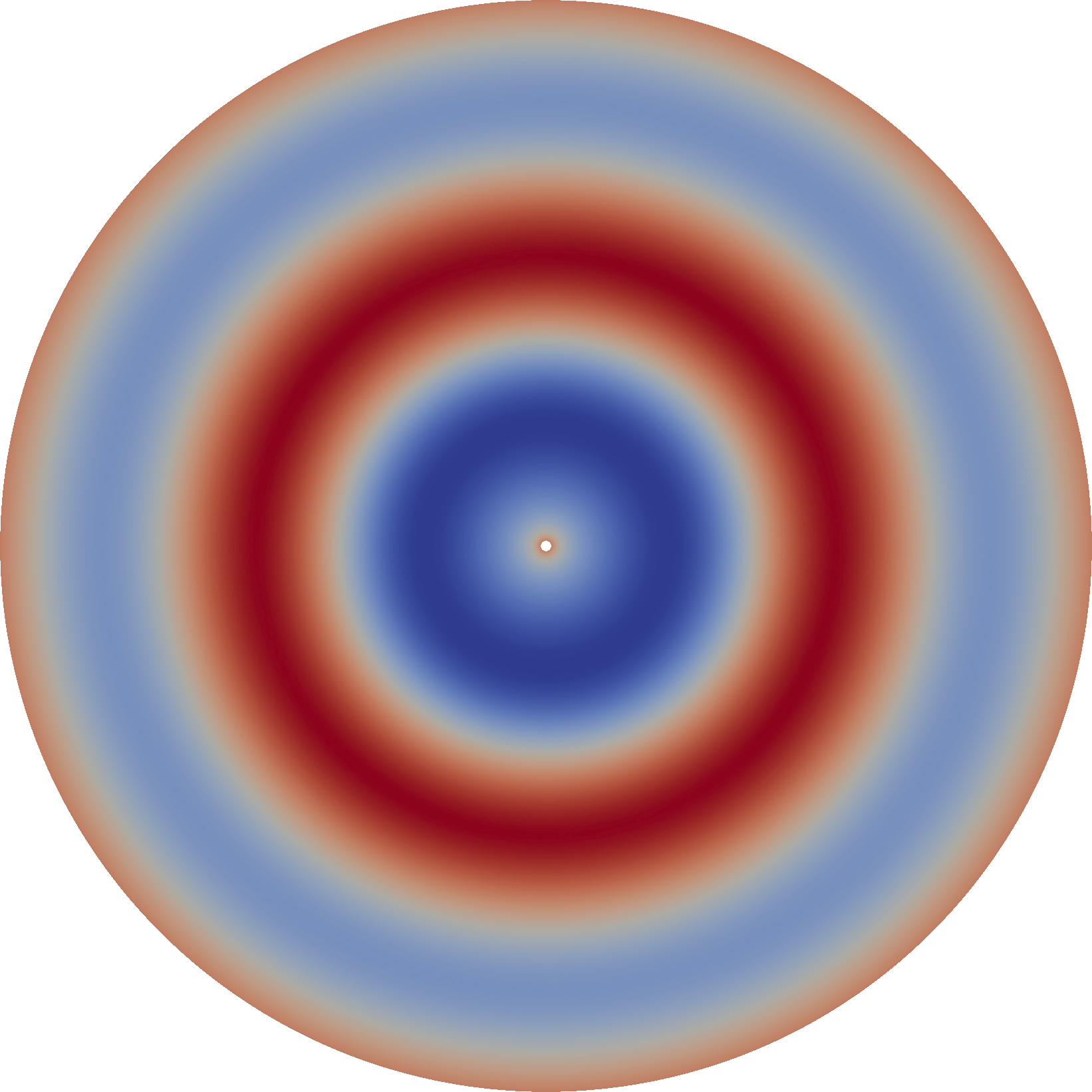} &
\includegraphics[width=3.9cm]{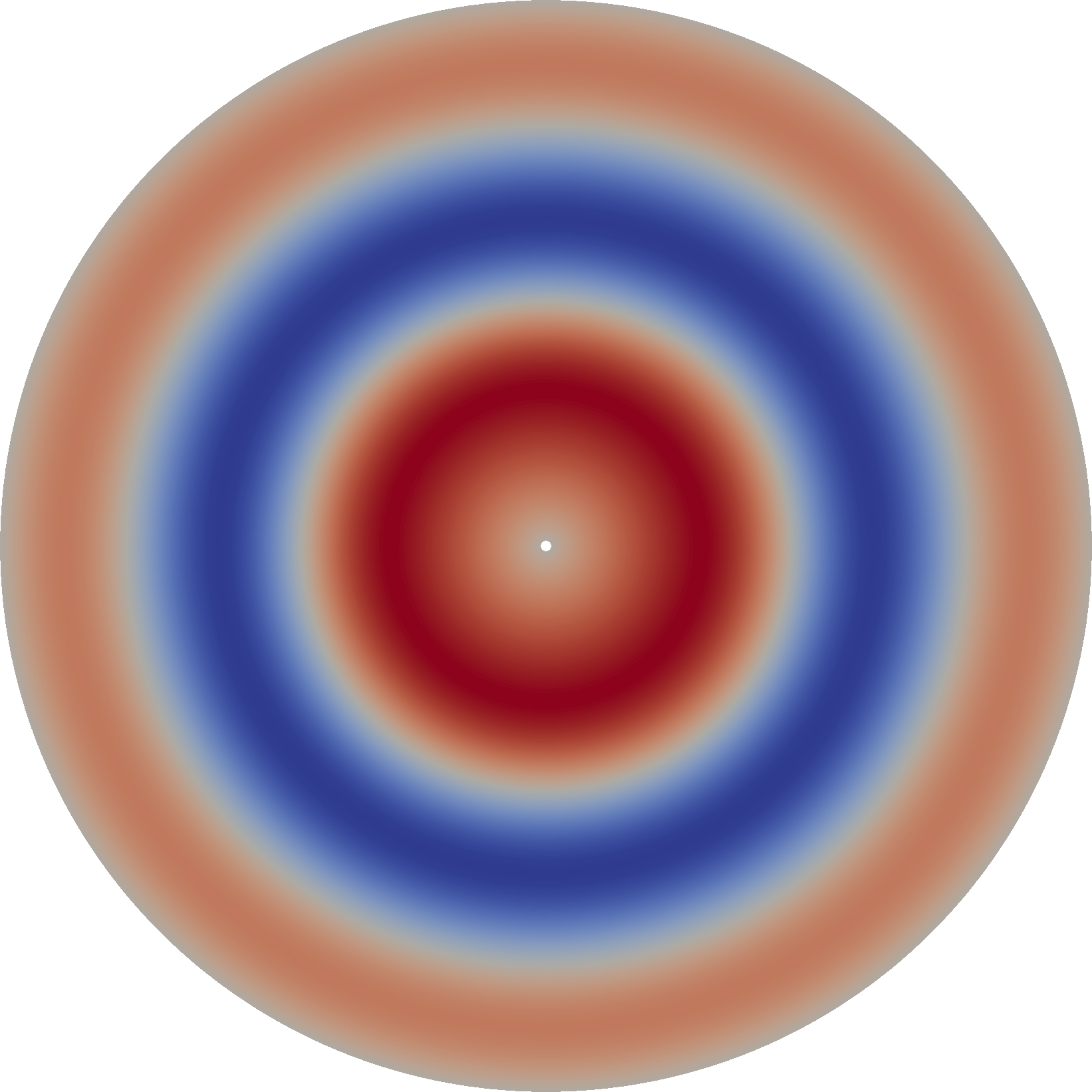} \\
\includegraphics[width=3.9cm]{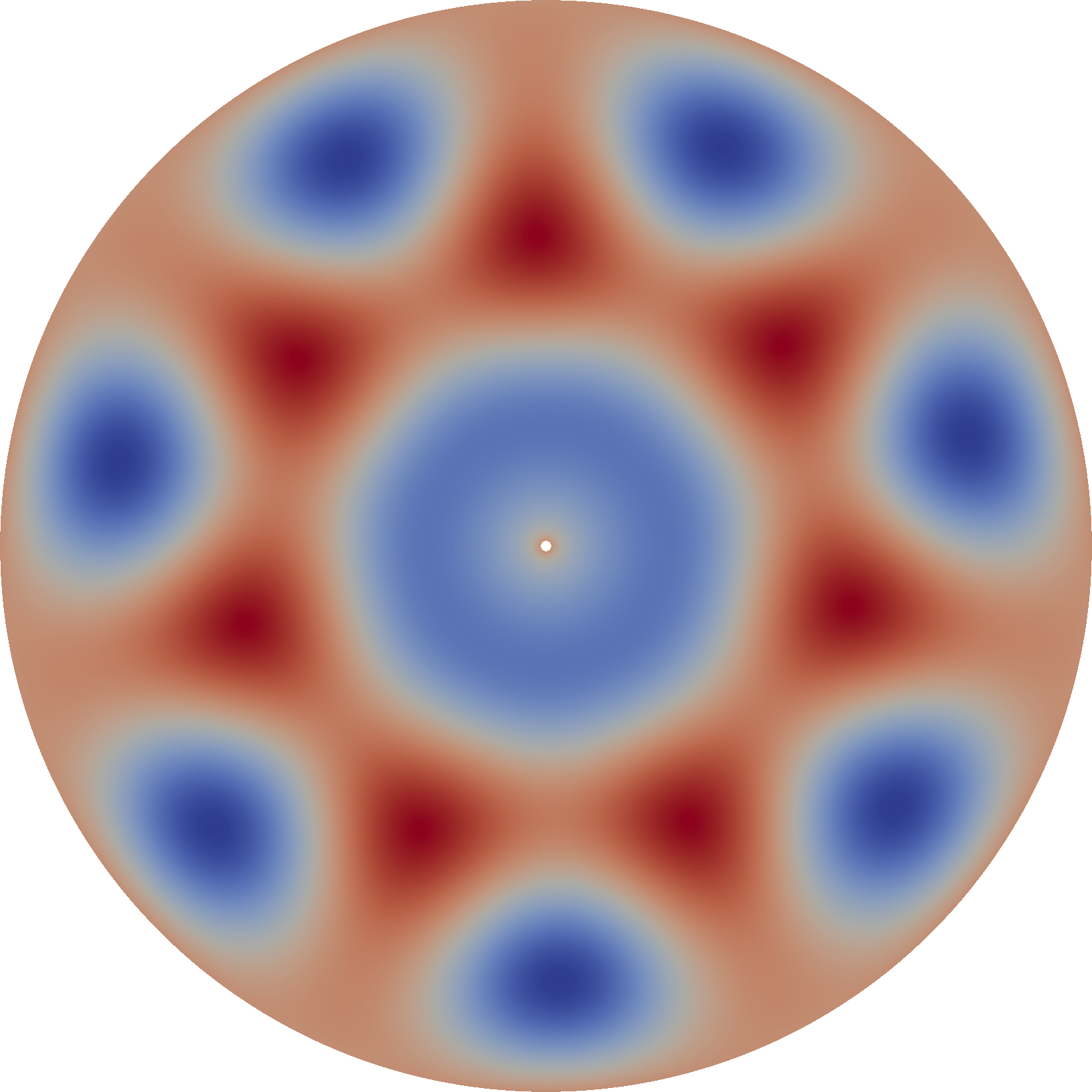} &
\includegraphics[width=3.9cm]{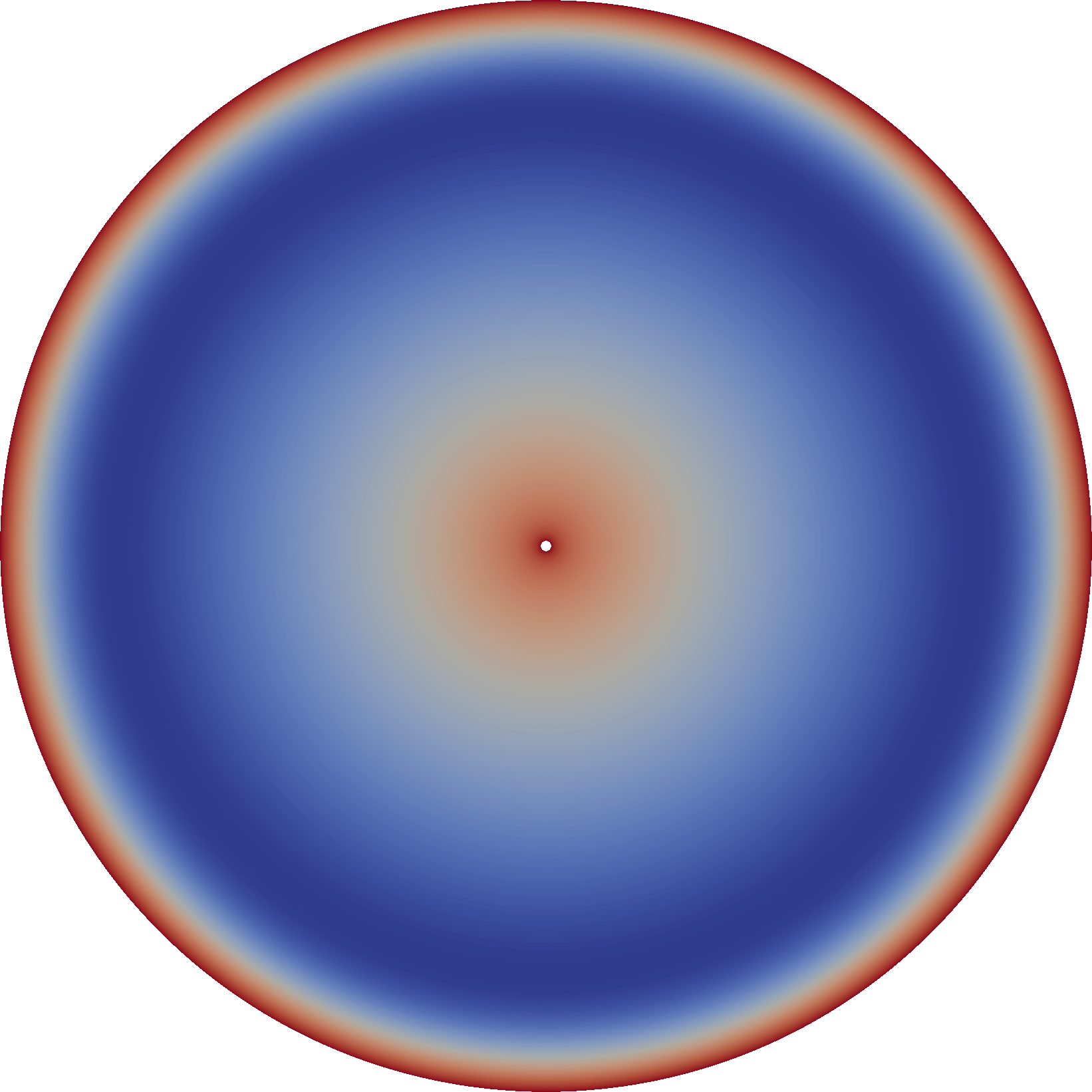} &
\includegraphics[width=3.9cm]{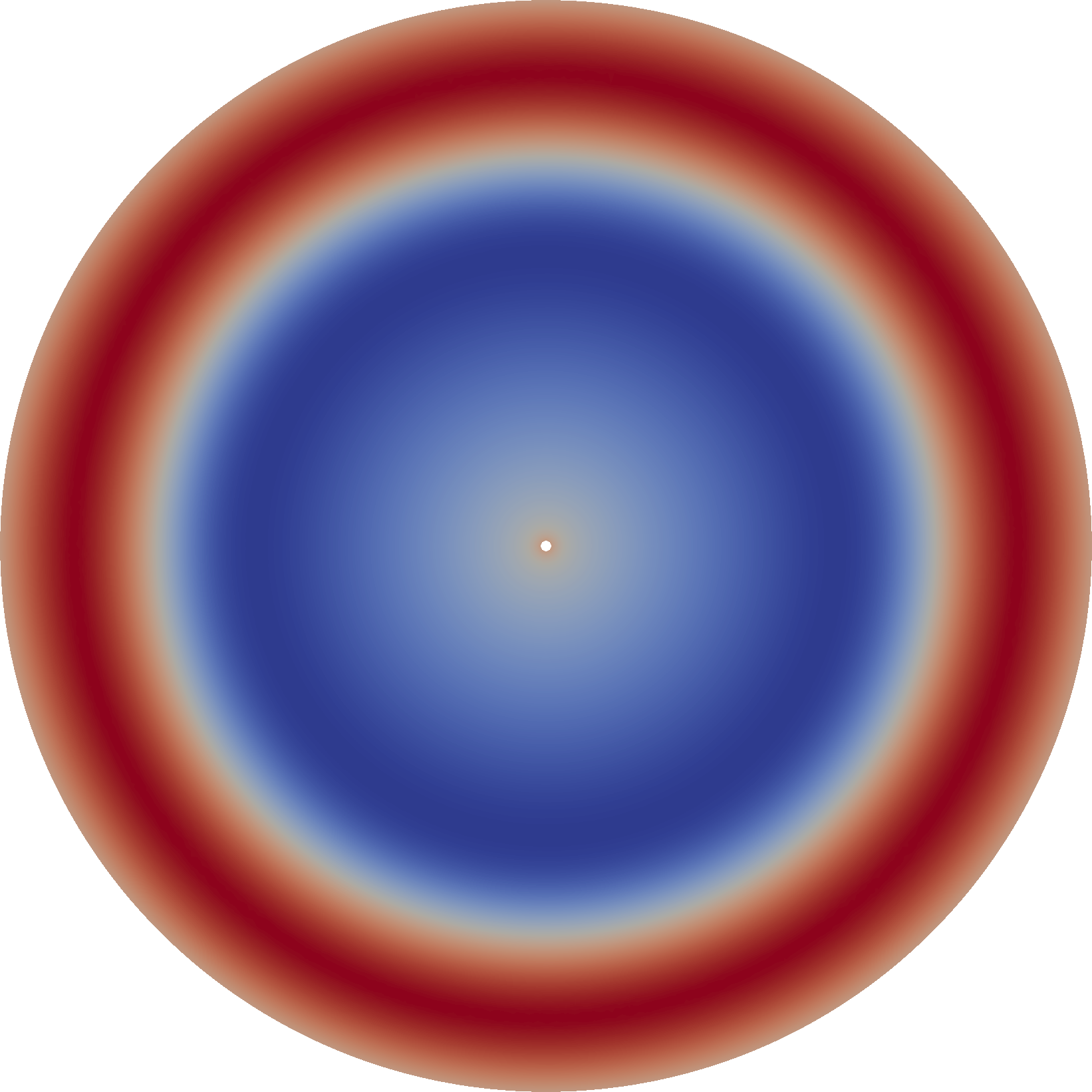} \\
\includegraphics[width=3.9cm]{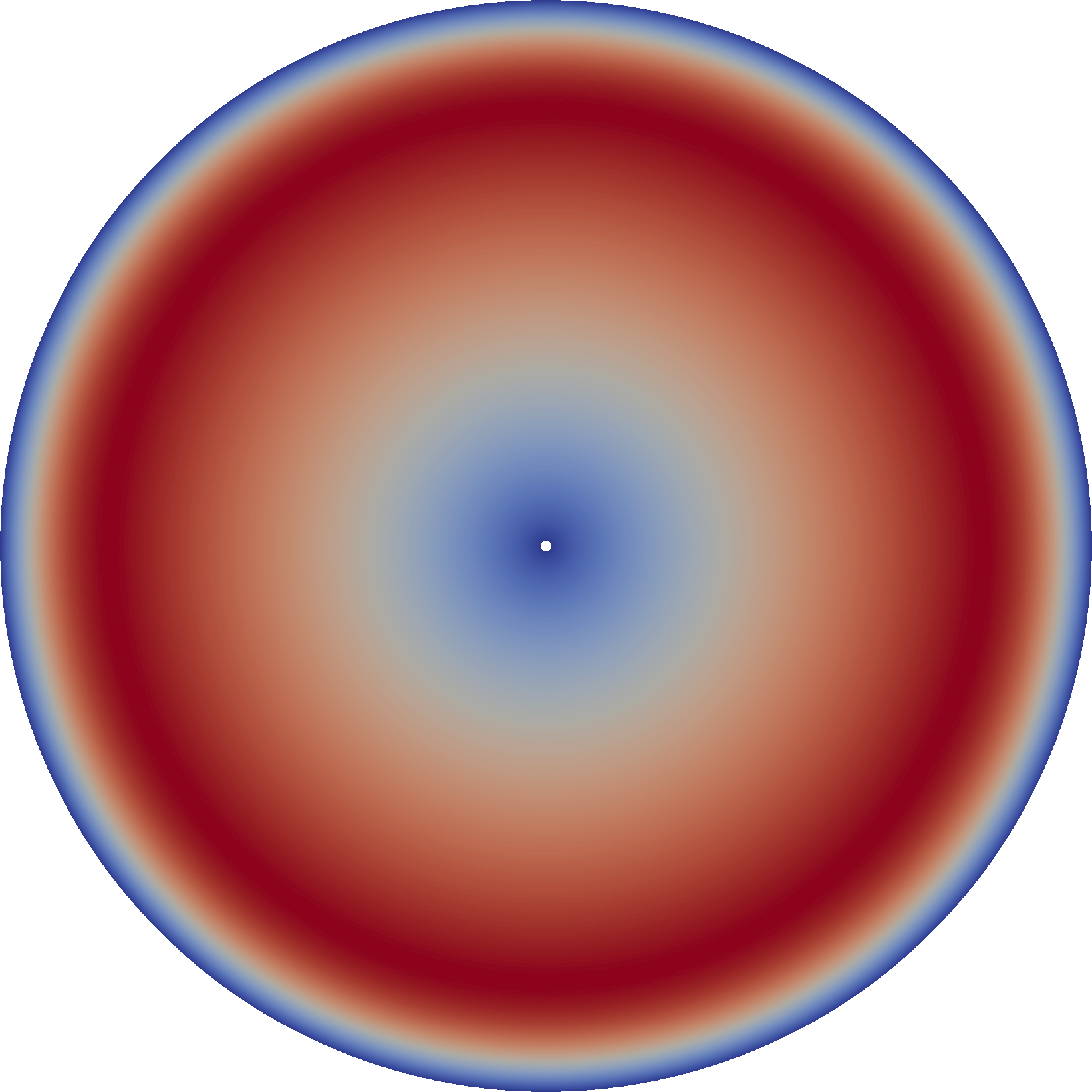} &
\includegraphics[width=3.9cm]{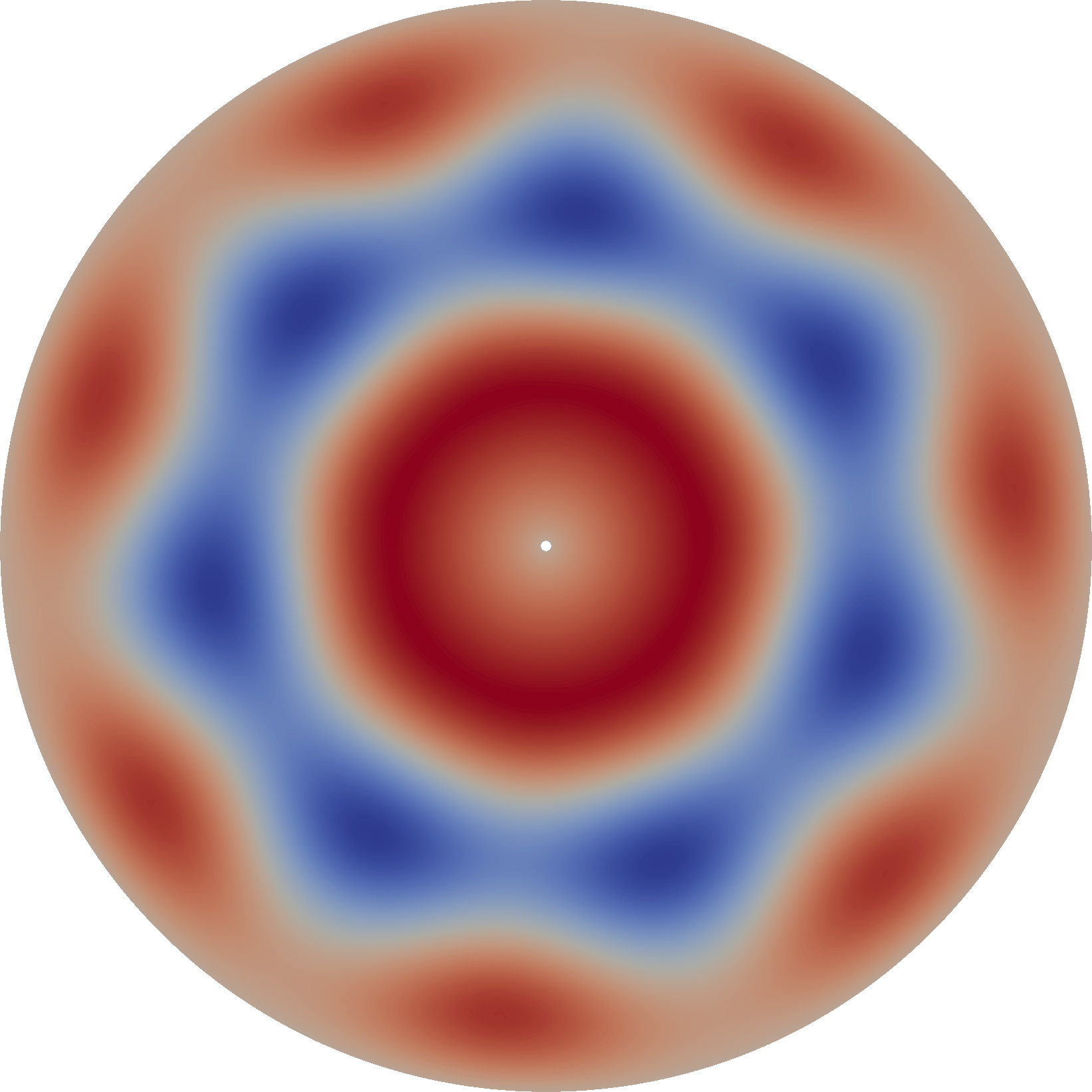} &
\includegraphics[width=3.9cm]{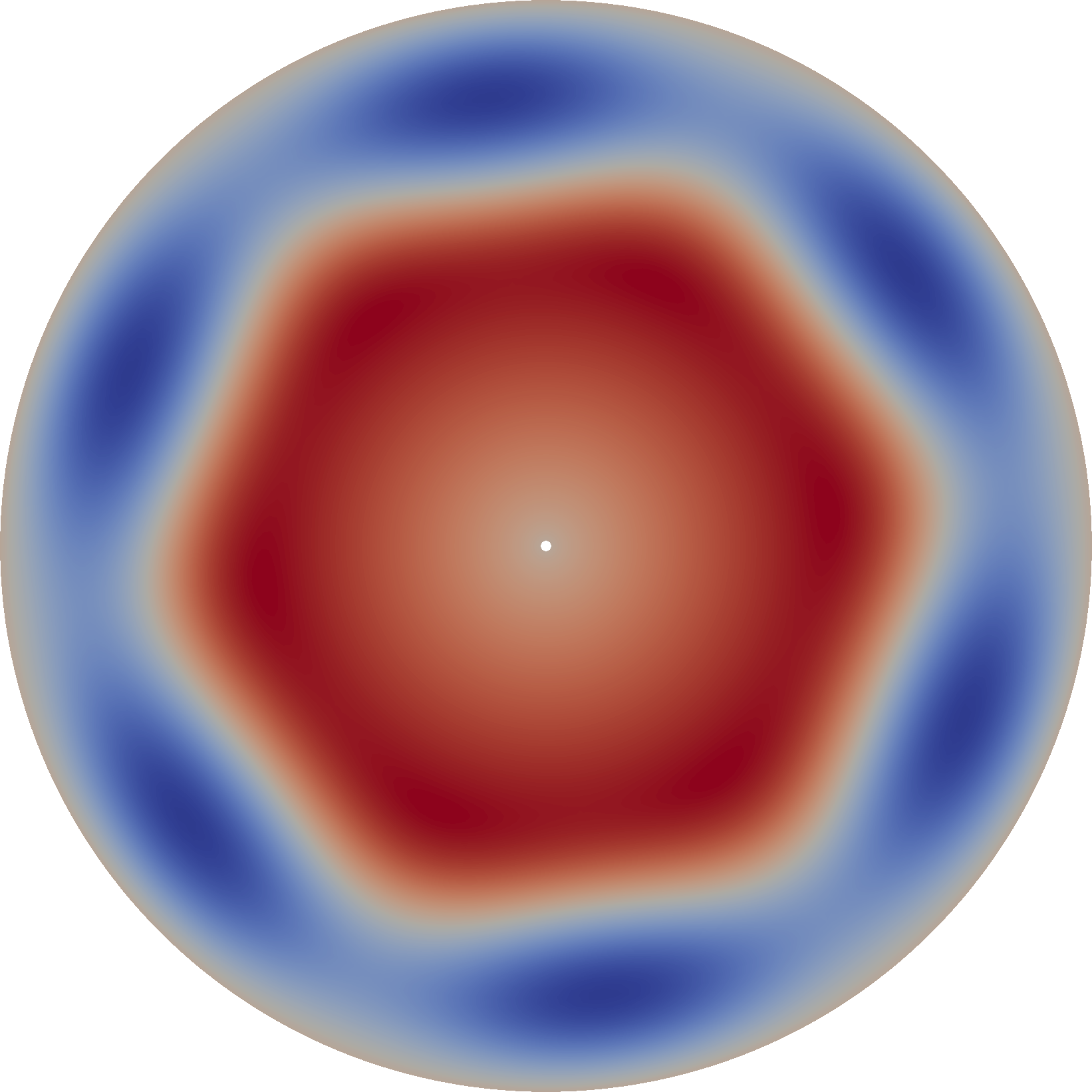} \\
\end{tabular}
\caption{Solutions of the Yamabe equation, found using deflation and negation. (All subfigures are plotted with different colour bars.)}
\label{fig:yamabe}
\end{figure}

The equation \eqref{eqn:yamabe} was discretised using standard linear finite elements via FEniCS.
An undamped Newton iteration was used to solve the nonlinear problem, with at most
100 iterations and an absolute residual termination tolerance of $10^{-10}$.  The mesh of 15968
vertices was generated with gmsh \cite{geuzaine2009}. Starting from the constant initial guess of 1,
deflation was successively applied with shift parameter $\alpha = 10^{-2}$.  After each deflation, the
nonlinear solver tolerance was reduced by a factor of $10^{-2}$. With this configuration, deflation
found 7 solutions, all starting from the same initial guess. Finally, further solutions were identified by
using the negative of each solution in turn as the initial guess for the (deflated) solver; the
negatives of solutions 4 and 6 yielded two additional distinct solutions. All of the 9 solutions identified with
this procedure are plotted in figure \ref{fig:yamabe}. The nonnegative solution was the
$7^{\mathrm{th}}$ to be identified. It is known that other solutions exist that this procedure has not found.

\begin{table}
\centering
\begin{tabular}{c|c}
\toprule
\# of deflations & average Krylov iterations per solve \\
\midrule
0 & 15.2 \\
1 & 17.1 \\
2 & 15.1 \\
3 & 16.9 \\
4 & 11.2 \\
5 & 12.4 \\
6 & 10.9 \\
7 & 15.5 \\
8 & 13.9 \\
\bottomrule
\end{tabular}
\caption{The performance of the preconditioning strategy of section \ref{sec:preconditioning}
on the Yamabe problem, section \ref{sec:yamabe}. As more solutions are deflated, the
number of Krylov iterations required does not significantly increase, indicating that the
preconditioning strategy suggested is effective.}
\label{tab:yamabe_preconditioning}
\end{table}

The linear systems arising in the Newton algorithm were solved using GMRES and the
GAMG classical algebraic multigrid algorithm via PETSc, with two
smoother iterations of Chebyshev and SOR. The relative and absolute Krylov solver tolerances were
both set to $10^{-12}$.  The average number of Krylov iterations required in the Newton iterations
for each solution are listed in table \ref{tab:yamabe_preconditioning}. The number of Krylov iterations
required for the deflated solves did not change significantly. This suggests that the
preconditioning strategy proposed in section \ref{sec:preconditioning} is effective, even as
several solutions are deflated. Additional runs were performed on ARCHER, the UK national supercomputer,
up to approximately two billion degrees of freedom; the deflated preconditioner was still effective,
even for these very fine discretisations.

While this example demonstrates the power of the deflation technique presented here, it also
emphasises its drawbacks: the algorithm is highly sensitive to the choice of the shift parameter.
Varying the shift parameter from $10^0, 10^{-1}, \dots, 10^{-7}$, the deflation procedure identifies
between 1 and 7 solutions. At present we are unable to give \emph{a priori} guidance on the choice of
the shift parameter, and reluctantly resort to numerical experimentation.

\subsection{Flow bifurcation: Navier--Stokes} \label{sec:navier_stokes}
The previous examples have demonstrated the utility of deflation for finding distinct solutions from the
same initial guess. In this example, we combine continuation (in Reynolds number) and deflation to
trace out the solution branches of the flow in a channel that undergoes a sudden expansion
\cite{benjamin1978a,fearn1990,sobey1993,drikakis1997,cliffe2014}.  Continuation and deflation are natural complements:
continuation uses the solutions identified for previous parameter values to trace out known solution
branches, whereas deflation attempts to find unknown solutions which may or may not lie on unknown
solution branches.

The system considered is the nondimensionalised steady incompressible Newtonian Navier--Stokes equations:
\begin{align} \label{eqn:ns}
-\frac{1}{\mathrm{Re}} \nabla^2 u + u \cdot \nabla u + \nabla p &= 0, \\
                                       \nabla \cdot u  &= 0,
\end{align}
where $u$ is the vector-valued velocity, $p$ is the scalar-valued pressure, and $\mathrm{Re}$ is the
Reynolds number.  The geometry is the union of two rectangles, $\Omega = (0, 2.5) \times (-1, 1)
\cup (2.5, 150) \times (-6, 6)$. Poiseuille flow is imposed at the inflow boundary on the left; an
outflow boundary condition ($\nabla u \cdot n = pn$, where $n$ is the unit outward normal) is imposed on the right; and no-slip ($u = 0$) imposed
on the remaining boundaries.  This configuration is symmetric around $y = 0$, and permits a symmetric solution.
It is well known (see e.g. \cite{cliffe2014}) that for low Reynolds numbers, this symmetric solution
is stable. At a critical Reynolds number, the system undergoes a pitchfork bifurcation, and the
system permits an unstable symmetric solution and (possibly several) pairs of asymmetric solutions.
The system \eqref{eqn:ns} is discretised using the Taylor--Hood finite element pair
\cite{taylor1973} via FEniCS; undamped Newton iteration is used for the resulting
nonlinear problems, with the undeflated linear systems solved on 16 cores with MUMPS
\cite{amestoy2001} via PETSc.

The approach to trace out the solution branches proceeds as follows. The continuation path starts at
$\mathrm{Re} = 10$ and uses a zero initial guess to find the first (symmetric) solution branch.
Then continuation steps of $\Delta \mathrm{Re} = 0.5$ are made. The initial guess for the Newton
iteration for \eqref{eqn:ns} is set to each solution identified for the previous value of
$\mathrm{Re}$ in turn; once each solution for the new parameter value is found, the residual is
deflated with $p = 1$, $\alpha = 1$. Newton's method is deemed to have failed if it has not converged within
20 iterations. Once all known solution
branches have been continued, the Newton iteration is initialised with the average of the available
previous solutions, and an attempt is made to locate any nearby unknown solutions via deflation.
We emphasise that more sophisticated continuation algorithms such as pseudo-arclength continuation
\cite{keller1977,allgower1993} could naturally be combined with deflation.

\begin{figure}
\begin{tabular}{c}
\includegraphics[width=12.5cm]{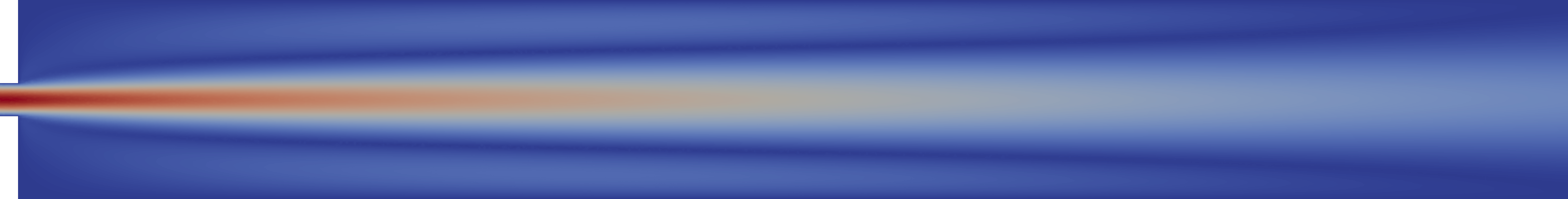} \\
\includegraphics[width=12.5cm]{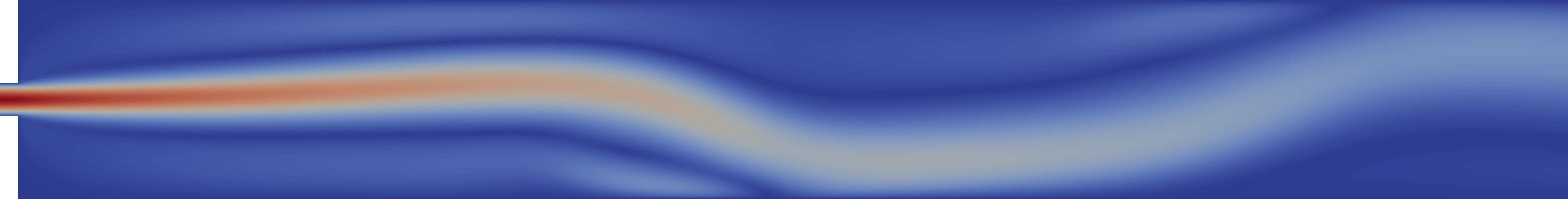} \\
\includegraphics[width=12.5cm]{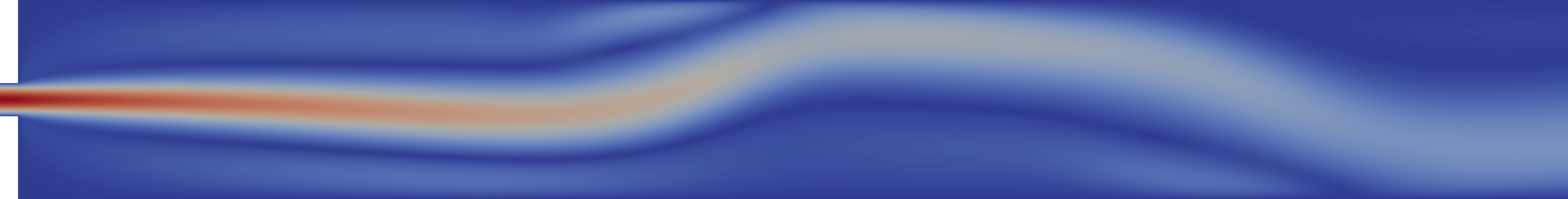} \\
\includegraphics[width=12.5cm]{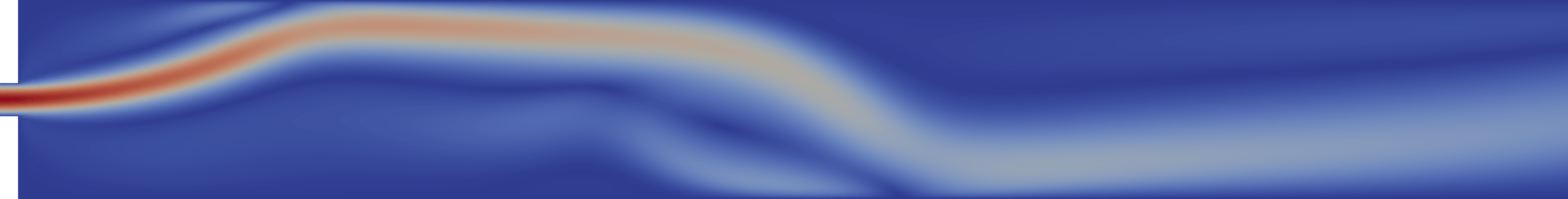} \\
\includegraphics[width=12.5cm]{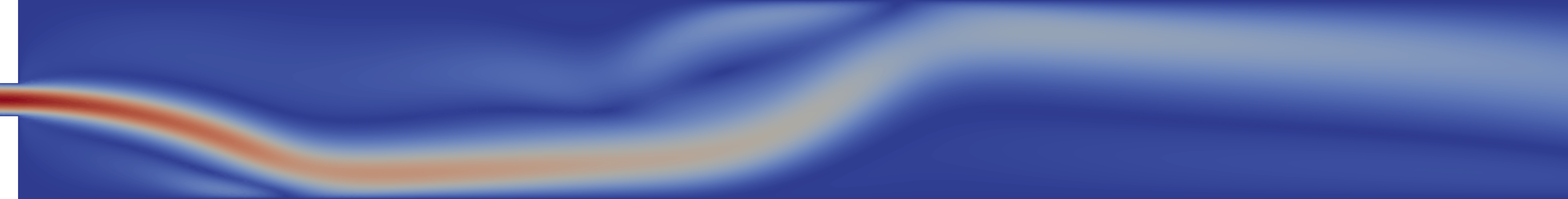} \\
\includegraphics[width=12.5cm]{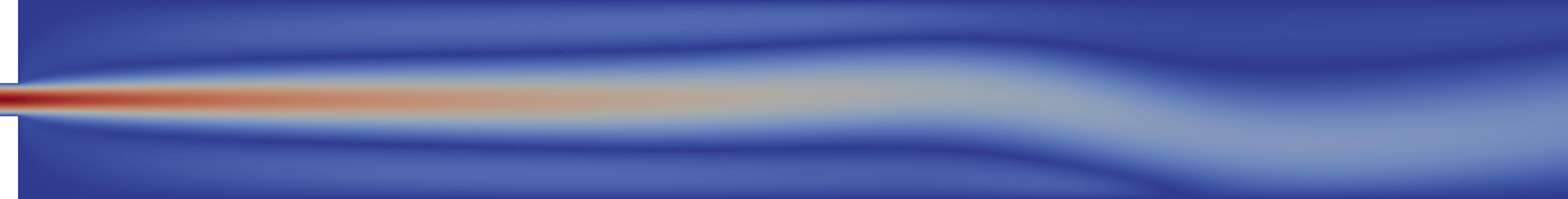} \\
\includegraphics[width=12.5cm]{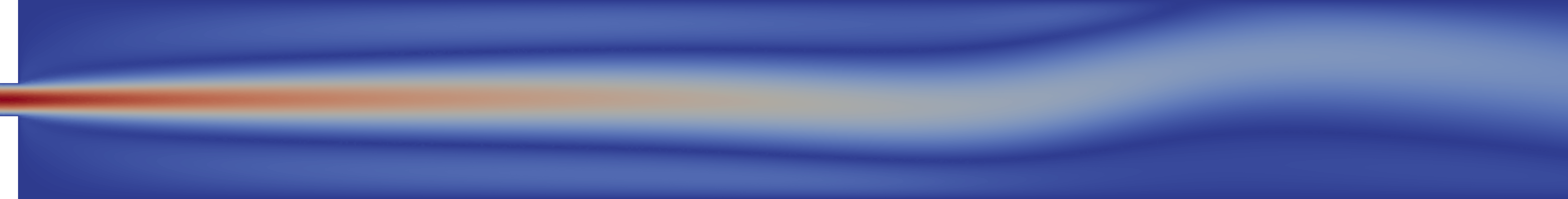} \\
\end{tabular}
\caption{The velocity magnitude of the solutions of the Navier--Stokes sudden expansion problem, at $\mathrm{Re} = 100$, found using deflation and reflection.}
\label{fig:navier_stokes}
\end{figure}

The algorithm was terminated at $\mathrm{Re} = 100$, having found 6 solutions (figure
\ref{fig:navier_stokes}). A bifurcation diagram is shown in figure \ref{fig:navier_stokes_avg_bifurcation},
with functional
\begin{equation}
J(u, p) = \pm \int_{\Omega} \left|u - \mathcal{R}u\right|^2,
\end{equation}
where $\mathcal{R}$ is the reflection operator through $y = 0$ and the sign is chosen according
to whether the jet initially veers up or down. This functional measures the asymmetry of the solution.
As we expect one symmetric solution and pairs of asymmetric solutions,
clearly the procedure has not found all possible solutions. This is confirmed by initialising a Newton
iteration with the reflection of each asymmetric solution found; this identifies one
extra solution (the second-last of figure \ref{fig:navier_stokes}).

\begin{figure}
\centering
  \begin{subfigure}[b]{0.95\textwidth}
  \includegraphics[width=\textwidth]{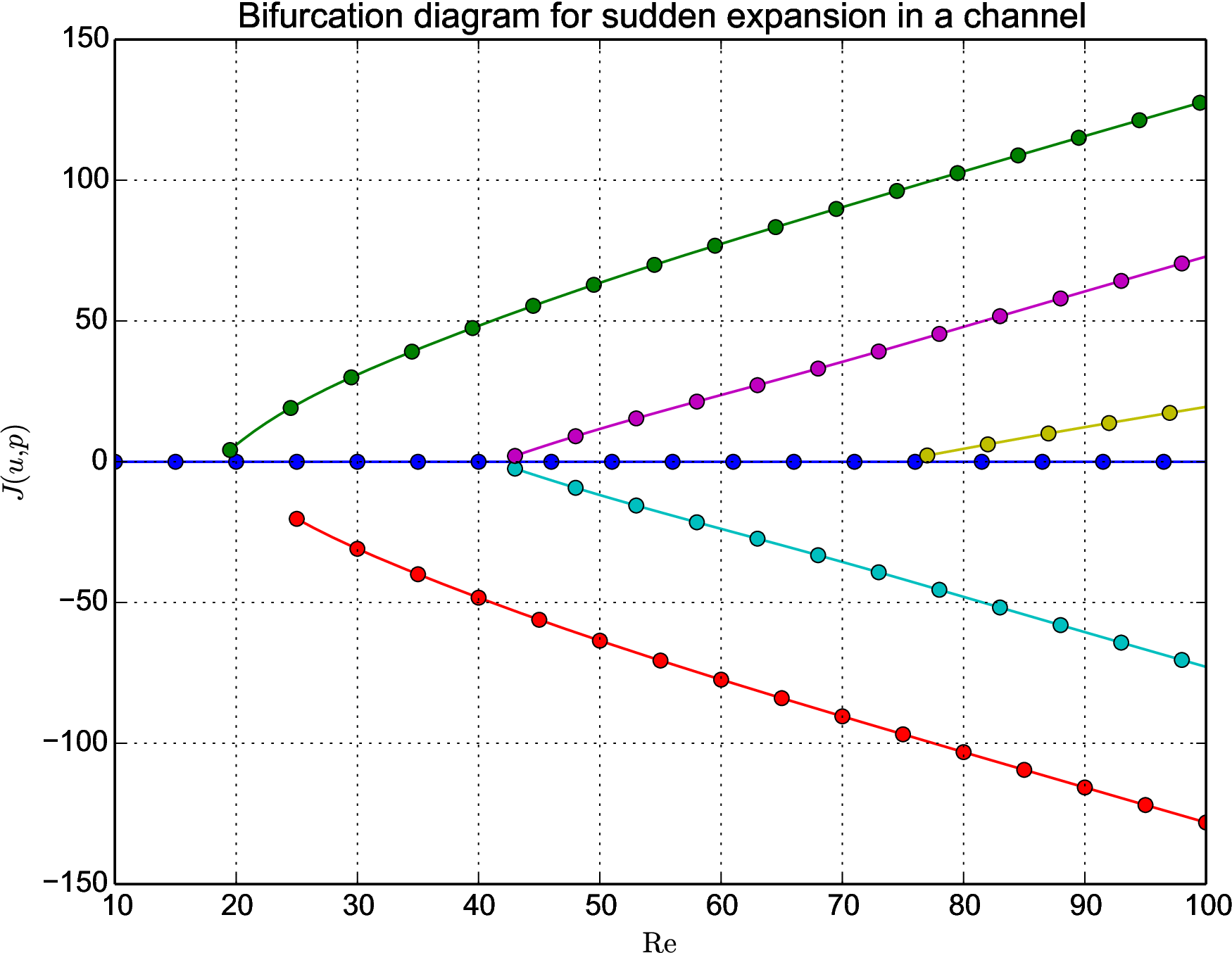}
  \caption{without reflection}
  \label{fig:navier_stokes_avg_bifurcation}
  \end{subfigure}
  \begin{subfigure}[b]{0.95\textwidth}
  \includegraphics[width=\textwidth]{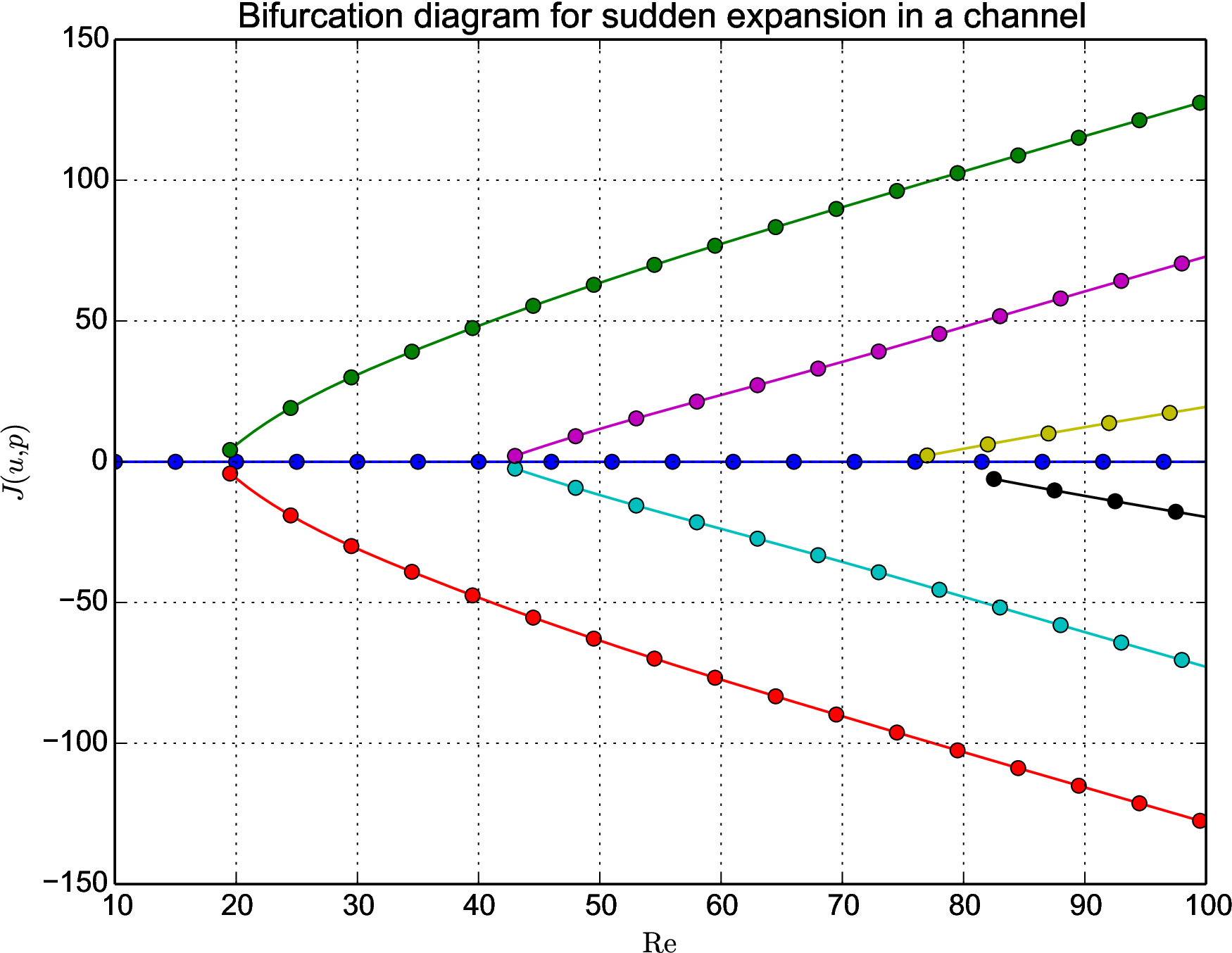}
  \caption{with reflection}
  \label{fig:navier_stokes_rft_bifurcation}
  \end{subfigure}
\caption{Bifurcation diagrams for the Navier--Stokes example of section
\ref{sec:navier_stokes}. Top: Only deflation was employed in the continuation
algorithm (in particular, reflection was not used). The algorithm has missed
one branch of seven. Bottom: When reflection is used in addition to generate initial guesses from known solutions,
the algorithm finds the branch that was overlooked.}
\end{figure}

This motivates the inclusion of reflection in the continuation algorithm itself.
Since for all Reynolds numbers we expect to have an odd number of solutions, the
algorithm was modified as follows: if an even number of solutions has been found
after a continuation step, use the reflection of each known solution as the
initial guess for Newton's method. With this modification, all seven solutions
were found by $\mathrm{Re} = 82.5$. The improved bifurcation diagram with the
addition of reflection is plotted in figure
\ref{fig:navier_stokes_rft_bifurcation}.  By including more knowledge of the
problem in the generation of new initial guesses from known solutions, the
continuation algorithm finds the additional branch that the previous approach
missed.

\section{Conclusion}

We have presented a deflation algorithm for computing distinct solutions of systems of partial
differential equations. By systematically modifying the residual of the problem, known solutions
may be eliminated from consideration and new solutions sought.
While deflation does not guarantee that all solutions of the problem will be found, it has proven itself
useful on several nonlinear PDEs drawn from special functions, phase separation, differential
geometry and fluid mechanics. It provides a powerful complement to continuation methods, and
motivates further developments of sophisticated nonlinear solvers \cite{brune2013}.

A possible future application of deflation is to the computation of multiple local optima of
PDE-constrained optimisation problems, possibly with additional equality constraints
\cite{hinze2009,borzi2011}, by means of computing distinct solutions of the associated
Karush--Kuhn--Tucker equations.

\bibliographystyle{siam}
\bibliography{literature}

\end{document}